\newtheorem{theorem}{Theorem}
\newtheorem{lemma}[theorem]{Lemma}
\newtheorem{corollary}[theorem]{Corollary}
\newtheorem{remark}{Remark}
\newcommand{\calP}{\mathcal{P}}
\newcommand{\calQ}{\mathcal{Q}}
\newcommand{\calE}{\mathcal{E}}
\newcommand{\NN}{\mathbb{N}}
\newcommand{\RR}{\mathbb{R}}
\renewcommand{\Pr}{\mathbb{P}}
\newcommand{\dist}{\mathop{d}}
\newcommand{\setCompl}[1]{{#1}^{c}}
\def\ifpdf\input{#.pdf_t}\else\input{#.pstex_t}\fi1{\ifpdf\input{#1.pdf_t}\else\input{#1.pstex_t}\fi}
\begin{document}

\title{A Bound for the Diameter of Random Hyperbolic Graphs\thanks{A preliminary version of this paper will appear in the Proceedings of the Twelfth Workshop on Analytic Algorithmics and Combinatorics (ANALCO, 2015).}}
\author{Marcos Kiwi\thanks{Depto.~Ing.~Matem\'{a}tica \&
  Ctr.~Modelamiento Matem\'atico (CNRS UMI 2807), U.~Chile. 
  Beauchef 851, Santiago, Chile, Email: \texttt{mkiwi@dim.uchile.cl}. Gratefully acknowledges the support of 
  Millennium Nucleus Information and Coordination in Networks ICM/FIC P10-024F
  and CONICYT via Basal in Applied Mathematics.} \\
\and 
Dieter Mitsche\thanks{Universit\'{e} de Nice Sophia-Antipolis, Laboratoire J-A Dieudonn\'{e}, Parc Valrose, 06108 Nice cedex 02, Email: \texttt{dmitsche@unice.fr}. The main part of this work was performed during a stay at Depto.~Ing.~Matem\'{a}tica \& Ctr.~Modelamiento Matem\'atico (CNRS UMI 2807), U.~Chile, and the author would like to thank them for their hospitality.}}
\date{}

\maketitle


\begin{abstract} \small\baselineskip=9pt Random hyperbolic graphs were
  recently introduced by Krioukov et.~al.~\cite{KPKVB10} as a model
  for large networks.  Gugelmann, Panagiotou, and Peter~\cite{GPP12}
  then initiated the rigorous study of random hyperbolic graphs using
  the following model: for $\alpha> \frac12$, $C\in\RR$, $n\in\NN$,
  set $R=2\ln n+C$ and build the graph $G=(V,E)$ with $|V|=n$ as
  follows: For each $v\in V$, generate i.i.d.~polar coordinates
  $(r_{v},\theta_{v})$ using the joint density function $f(r,\theta)$,
  with $\theta_{v}$ chosen uniformly from $[0,2\pi)$ and $r_{v}$ with
  density $f(r)=\frac{\alpha\sinh(\alpha r)}{\cosh(\alpha R)-1}$ for
  $0\leq r< R$. Then, join two vertices by an edge, if their
  hyperbolic distance is at most $R$. We prove that in the range
  $\frac12 < \alpha < 1$ a.a.s.~for any two vertices of the same
  component, their graph distance is 
  $O(\log^{C_0+1+o(1)}n)$, where $C_0=2/(\frac12-\frac34\alpha+\frac{\alpha^2}{4})$, 
  thus answering a
  question raised in~\cite{GPP12} concerning the diameter of such
  random graphs. As a corollary from our proof we obtain that the
  second largest component has size $O(\log^{2C_0+1+o(1)}n)$, thus answering a
  question of Bode, Fountoulakis and M\"{u}ller~\cite{BFM13}. We also
  show that a.a.s.~there exist isolated components forming a path of
  length $\Omega(\log n)$, thus yielding a lower bound on the size of
  the second largest component.
\end{abstract}

\medskip
\textbf{Keywords:} Random hyperbolic graphs; Complex networks.

\section{Introduction}\label{sec:intro}
Building mathematical models to capture essential properties of 
  large networks has become an important objective in order 
  to better understand them.
An interesting new proposal in this direction
  is the model of random hyperbolic graphs recently introduced
  by Krioukov et.~al.~\cite{KPKVB10} (see also~\cite{PKBV10}).
A good model should on the one hand replicate the characteristic 
  properties that are observed in real world networks (e.g.,
  power law degree distributions, high clustering and small
  diameter), but on the other hand 
  it should also be susceptible to mathematical analysis.
There are models that partly succeed in the first task but are hard
  to analyze rigorously. 
Other models, like the classical Erd\"os-Renyi $G(n,p)$ model, 
  can be studied mathematically, but fail to capture certain aspects 
  observed in real-world  networks.
In contrast, the authors of~\cite{PKBV10}
  argued empirically and via some non-rigorous methods 
  that random hyperbolic graphs have many of the desired properties. 
Actually, Bogu\~n\'a, Papadopoulos and Krioukov~\cite{BPK10}
  computed explicitly a maximum likelihood fit
  of the Internet graph, convincingly illustrating that this model
  is adequate for reproducing the structure of real networks with high
  accuracy.
Gugelmann, Panagiotou, and Peter~\cite{GPP12} 
  initiated the rigorous study of random hyperbolic graphs. 
They compute exact asymptotic expressions for the maximum degree, the 
  degree distribution (confirming rigorously that the degree sequence 
  follows a power-law distribution with controllable exponent), and 
  also estimated the expectation of the clustering coefficient.

In words, the random hyperbolic graph model
  is a simple variant of the uniform distribution of $n$ vertices within
  a disc of radius $R$ of the hyperbolic plane, where 
  two vertices are connected
  if their hyperbolic distance is at most $R$.
Formally, the random hyperbolic graph model $G_{\alpha,C}(n)$ is defined
  in~\cite{GPP12} as described next:
  for $\alpha> \frac12$, $C\in\RR$, $n\in\NN$, set $R=2\ln n+C$, and 
  build $G=(V,E)$ with vertex set $V=[n]$ as follows:
\begin{itemize}
\item For each $v\in V$, polar coordinates $(r_{v},\theta_{v})$ are 
  generated identically and independently distributed with joint
  density function $f(r,\theta)$, with $\theta_{v}$
chosen uniformly at random in the interval $[0,2\pi)$ and $r_{v}$
  with density:
\begin{align*}
f(r) & = \begin{cases}\displaystyle
   \frac{\alpha\sinh(\alpha r)}{C(\alpha,R)}, &\text{if $0\leq r< R$}, \\
   0, & \text{otherwise},
  \end{cases}
\end{align*}
where $C(\alpha,R)=\cosh(\alpha R)-1$ is a normalization constant.

\item For $u,v\in V$, $u\neq v$, there is an edge with endpoints 
  $u$ and $v$ provided $\dist(r_{u},r_{v},\theta_{u}-\theta_{v})\leq R$,
  where $d=\dist(r,r',\theta-\theta')$ denotes the hyperbolic distance
  between two vertices whose native representation
  polar coordinates are $(r,\theta)$ and $(r',\theta')$, 
  obtained by solving 
\begin{equation}\label{eqn:coshLaw}
\cosh(d) 
 = \cosh(r)\cosh(r')-\sinh(r)\sinh(r')\cos(\theta{-}\theta'). 
\end{equation}
\end{itemize}
The restriction 
  $\alpha>1/2$ and the role of $R$, informally speaking,
  guarantee that the resulting graph has a bounded average degree (depending
  on $\alpha$ and $C$ only).
If $\alpha<1/2$, then the degree sequence is so 
  heavy tailed that this is impossible.

Research in random hyperbolic graphs is in a sense in its infancy.
Besides the results mentioned above, very little else is known.
Notable exceptions are  the emergence and evolution of giant components~\cite{BFM13},
  connectedness~\cite{BFM13b}, results on the global clustering 
  coefficient of the so called binomial model of random hyperbolic graphs~\cite{CF13}, and on the evolution of graphs on more general 
  spaces with negative curvature~\cite{F12}. 

\medskip

\textbf{Notation.} As typical in random graph theory, we shall
consider only asymptotic properties as $n\rightarrow \infty$. We say
that an event in a probability space holds asymptotically almost
surely (a.a.s.) if its probability tends to one as $n$ goes to
infinity. 
We follow the standard conventions concerning 
  asymptotic notation. In particular, we write $f(n)=o(g(n))$, if
  $\lim_{n\to\infty} |f(n)|/|g(n)|=0$. 
We will nevertheless also use $1-o(\cdot)$ when dealing
with probabilities. 
We also say that an event holds 
  \emph{with extremely high probability}, w.e.p., if it occurs
  with probability at least $1-e^{-\omega(\log n)}$. 
Throughout this paper, $\log n$ always denotes the natural logarithm
of $n$. 

The constants $\alpha, C$ used in the model and the constants
$C_0,\delta$ defined below have only one special meaning, other
constants such as $C',C'',c,c_1,c_2,c_3$ change from line to
line. Since we are interested in asymptotic results only, we ignore
rounding issues throughout the paper.

\subsection{Results}
The main problem we address in this work is the 
  natural question, explicitly stated in~\cite[page 6]{GPP12}, that asks to 
  determine the expected diameter of the giant component 
    of a random hyperbolic graph $G$ chosen according to $G_{\alpha,C}(n)$ for $\frac12 < \alpha < 1$. 
We look at this range, since for $\alpha < \frac12$ a.a.s.~a very small central configuration yielding a diameter of at most $3$ exists (see~\cite{BFM13b}): consider a ball of sufficiently small radius around the origin and partition it into $3$ sectors. It can be shown that in each of the sectors a.a.s. there will be at least one vertex, and also that every other vertex is connected to at least one of the three vertices. For $\alpha=\frac12$  the probability of this configuration to exist depends on $C$ (see~\cite{BFM13b}).  For $\alpha > 1$, there exists no giant component (see~\cite{BFM13}); the case $\alpha=1$ 
is a matter requiring further study.

We show (see Theorem~\ref{thm:main}) 
  that for   $\frac12 < \alpha < 1$, a.a.s., for any two vertices of the same
  component, their graph distance is 
$O(\log^{C_0+1+o(1)} n)$, where $C_0=2/(\frac12-\frac34\alpha+\frac{\alpha^2}{4})$. 
To establish our main result we rely on the known,
  and easily established fact, that for the range of $\alpha$ we 
  are concerned with, a graph $G_{\alpha,C}(n)$ has a ``center'' 
  clique whose size is w.e.p.~$\Theta(n^{1-\alpha})$.
Then, we show that, depending on how ``far away'' from the 
  center clique a vertex $Q$ is, 
  there is either a very high or at least non-negligible probability that the vertex
  connects to the center clique through a path of polylogarithmic
  (in $n$) length, or otherwise all paths starting from $Q$ 
  have at most polylogarithmic length.
It immediately follows that two vertices in the same connected
  component, a.a.s., either connect to the center clique 
  through paths of polylogarithmic length, or belong to paths of 
  size at most polylogarithmic.
Either way, a bound on the diameter of $G_{\alpha,C}(n)$ follows.
Rigorously developing the preceding argument requires overcoming 
  significant obstacles, not only technical but also in terms of 
  developing the insight to appropriately define the relevant typical 
  events which are also amenable to a rigorous study of their 
  probabilities of occurrence.
Our main result's proof argument also yields (see Corollary~\ref{cor:main3})
  that the size of the second largest component 
  is $O(\log^{2C_0+1+o(1)} n)$, thus answering a
  question of Bode, Fountoulakis and M\"{u}ller~\cite{BFM13}.
As a complementary result (see Theorem~\ref{thm:main2}), we establish that
  a.a.s.~there exists a component forming an induced path of 
  length $\Theta(\log n)$.
This last result pinpoints a region of 
  hyperbolic space (a constant width band around the origin of
  sufficiently large radius) where it is likely to find 
  a path of length $\Theta(\log n)$.

Another contribution of our work is that it proposes at least two
  refinements and variants of the \emph{breadth exploration process}
  introduced by Bode, Fountoulakis and M\"{u}ller~\cite{BFM13}.
Specifically, we strengthen the method by identifying more involved
  strategies for exploring hyperbolic space, not solely dependent
  on the angular coordinates of its points, and not necessarily 
  contiguous regions of space.
We hope these refinements will be useful in tackling other problems concerning
  the newly minted (and captivating) hyperbolic random graph model. 

\subsection{Organization}
This work is organized as follows. 
In Section~\ref{sec:prelim}
  we introduce the geometric framework and give background results. 
Section~\ref{sec:upper} deals with the upper bound on the diameter 
  as well as the upper bound   on the size of the second largest component.
Section~\ref{sec:lower} is dedicated to the lower bound.

\section{Conventions, background results and preliminaries}\label{sec:prelim}
In this section we first fix some notational conventions.
We then recall, as well as establish, a few results 
  we rely on throughout the following sections. 
All claims in some sense reflect the nature of hyperbolic space, 
  either by providing useful approximations for the angles formed by 
  two adjacent sides of a triangle whose vertices are at
  given distances, or by establishing good approximations for 
  the mass of regions of hyperbolic space obtained by some set 
  algebraic manipulation of balls.
We conclude the section by discussing the de-Poissonization technique
  on which this work heavily relies.

Henceforth, for a point $P$ in hyperbolic plane, 
  we let $(r_{P},\theta_{P})$ denote its polar coordinates 
  ($0\leq r_{P}< R$ and $0\leq \theta_{P}<2\pi$).
The point with polar coordinates $(0,0)$ is called the origin and 
  is denoted by $O$.

By~\eqref{eqn:coshLaw}, the hyperbolic triangle formed by the geodesics 
  between points $A$, $B$, and  $C$, with opposing side segments of length $a$, $b$, and $c$ respectively,
  is such that the angle formed at $C$ is
 (see Figure~\ref{fig:triangHyper}):
\[
\theta_{c}(a,b) = 
\arccos\Big(\frac{\cosh(a)\cosh(b)-\cosh(c)}{\sinh(a)\sinh(b)}\Big).  
\]

\begin{figure}[h]
\begin{center}\ifpdf\input{trianHyper.pdf_t}\else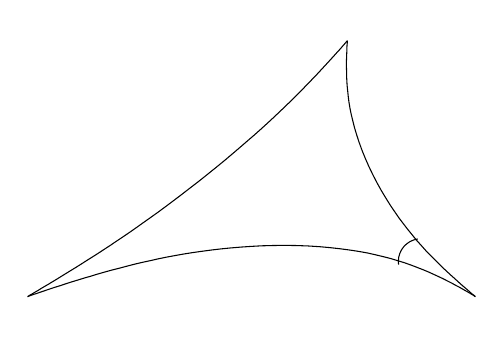\fi\end{center}
\caption{Hyperbolic triangle.}\label{fig:triangHyper}
\end{figure}
Clearly, $\theta_{c}(a,b) = \theta_{c}(b,a)$. 

In fact, although some of the proofs hold for a wider range of $\alpha$, 
  we will always assume $\frac12<\alpha<1$, since as 
  already discussed, we are interested in this regime.  
In order to avoid unnecessary repetitions, we henceforth omit this restriction
  from the statement of this article's results.

First, we recall some useful estimates. 
A very handy approximation for $\theta_{c}(\cdot,\cdot)$ 
  is given by the following result.
\begin{lemma}[{\cite[Lemma 3.1]{GPP12}}]\label{lem:aproxAngle}
If $0\leq\min\{a,b\}\leq c\leq a+b$, then
\[
\theta_{c}(a,b) = 2e^{\frac{1}{2}(c-a-b)}\big(1+\Theta(e^{c-a-b})\big).
\]
\end{lemma}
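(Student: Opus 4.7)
The plan is to reduce the claim to an estimate on $\sin^2(\theta_c(a,b)/2)$ followed by Taylor expansion of $\arcsin$. I would begin from the hyperbolic law of cosines
\[
\cos\theta_c(a,b) = \frac{\cosh a \cosh b - \cosh c}{\sinh a \sinh b},
\]
apply the product-to-sum identities $\cosh a \cosh b = \tfrac{1}{2}(\cosh(a+b) + \cosh(a-b))$ and $\sinh a \sinh b = \tfrac{1}{2}(\cosh(a+b) - \cosh(a-b))$, and use the half-angle identity $1 - \cos\theta = 2\sin^2(\theta/2)$ to arrive at
\[
\sin^2\!\big(\theta_c(a,b)/2\big) = \frac{\cosh c - \cosh(a-b)}{\cosh(a+b) - \cosh(a-b)}.
\]

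Next I would apply $\cosh x - \cosh y = 2\sinh\!\big(\tfrac{x+y}{2}\big)\sinh\!\big(\tfrac{x-y}{2}\big)$ to both numerator and denominator. Writing $\eta := (a+b-c)/2$, the hypothesis $c \leq a+b$ gives $\eta \geq 0$, and the combined assumptions (together with the implicit triangle inequality $c \geq |a-b|$) force $\eta \leq \min\{a,b\}$, so that
\[
\sin^2\!\big(\theta_c(a,b)/2\big) = \frac{\sinh(a-\eta)\sinh(b-\eta)}{\sinh a \sinh b}.
\]
The identity $\sinh(x-\eta)/\sinh x = e^{-\eta}(1 - e^{-2(x-\eta)})/(1 - e^{-2x})$ then lets me factor out $e^{-2\eta} = e^{c-a-b}$, producing
\[
\sin^2\!\big(\theta_c(a,b)/2\big) = e^{c-a-b}\cdot\frac{(1-e^{-2(a-\eta)})(1-e^{-2(b-\eta)})}{(1-e^{-2a})(1-e^{-2b})}.
\]

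The remaining step is to show that the rational factor equals $1 + \Theta(e^{c-a-b})$, after which taking a square root (using $\sqrt{1+x} = 1 + \Theta(x)$ for bounded $x$) and applying $\arcsin(y) = y + O(y^3)$ for small $y$ upgrades the estimate to $\theta_c(a,b) = 2e^{(c-a-b)/2}(1 + \Theta(e^{c-a-b}))$. I expect this rational-factor bound to be the \emph{main obstacle}: writing $r = e^{-2\eta}$, $p = e^{-2a}$, $q = e^{-2b}$ (so $p, q \leq r \leq 1$ in the admissible range), the algebraic identity
\[
r^2(1-p)(1-q) - (r-p)(r-q) = (1-r)\big[(p+q)r - pq(1+r)\big]
\]
exposes the deviation of the rational factor from $1$ in closed form, and the task reduces to verifying the $\Theta(r)$ bound uniformly in the admissible range, which requires some care in the borderline regime where $e^{c-a-b}$ is not small.
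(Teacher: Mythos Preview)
The paper does not prove this lemma; it is quoted from \cite[Lemma~3.1]{GPP12} and used as a black box throughout. There is therefore no proof in the paper to compare your argument against.

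Your route via the half-angle identity is the standard one, and the computations you give are correct, including the algebraic identity for the deviation of the rational factor from~$1$. Two remarks on the step you flag as the obstacle. First, the hypothesis $\min\{a,b\}\le c$ (beyond the bare triangle inequality $c\ge|a-b|$) is exactly what gives, in your notation with $a\le b$, the extra relation $q\le r^2$; together with $q\le p\le r$ this makes the bracket $(p+q)r-pq(1+r)=p(r-q)+qr(1-p)$ nonnegative and $O(r^2)$, so the deviation of the rational factor from~$1$ is $O(1)$ uniformly and $O(r)$ when $r$ is small. Second, your Taylor step $\arcsin(y)=y+O(y^3)$ only covers the regime where $r=e^{c-a-b}\to 0$; when $r$ is bounded away from~$0$ the asserted identity degenerates to the crude statement $\theta_c=\Theta(1)$, which one verifies directly (the angle lies in a fixed subinterval of $(0,\pi]$) rather than through your asymptotic expansion. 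So the ``borderline'' case you worry about is a separate, easier check, not a refinement of the expansion.
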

\begin{remark}\label{rem:aproxAngle}
We will use the previous lemma also in this form: let
  $A$ and $B$ be two points such that $r_A,r_B > R/2$, 
  $0 \leq \min\{r_A,r_B\} \leq d:=d(A,B) \leq R$.
Then, taking $c=d$, $a=r_A$, $b=r_B$ in Lemma~\ref{lem:aproxAngle} and noting that $r_{A}+r_{B}\geq R\geq d$, we get
\[
|\theta_A - \theta_B|=\theta_{d}(r_A,r_B) 
  = 2e^{\frac{1}{2}(d - r_A - r_B)}\big(1+\Theta(e^{d-r_A-r_B})\big).
\]
Note also that for fixed $r_A,r_B > R/2$, $|\theta_A - \theta_B|$ is increasing as a function of $d$ (for $d$ satisfying the constraints). 
Below, when aiming for an upper bound, we always use $d=R$.
\end{remark}
Throughout, we will need estimates for measures of 
  regions of the hyperbolic plane, and more specifically, for regions obtained by performing some set algebra
  involving a couple of balls.
Hereafter, for a point $P$ of the hyperbolic plane, $\rho_{P}$ will be 
  used to denote the radius 
  of a ball centered at $P$, and $B_{P}(\rho_{P})$ to denote 
  the closed ball of radius $\rho_{P}$ centered at $P$.
Also, we denote by $\mu(S)$ the probability measure of a set $S$, i.e.,
\[
\mu(S) = \int_{S}f(r,\theta)drd\theta.
\]

We collect now a few results for such measures.
\begin{lemma}[{\cite[Lemma~3.2]{GPP12}}]\label{lem:muBall} 
For any $0\leq \rho_{O}\leq R$, $\mu(B_{O}(\rho_{O})) =
  e^{-\alpha(R-\rho_{O})}(1+o(1))$.
\end{lemma}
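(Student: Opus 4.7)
The plan is to exploit the rotational symmetry of the density around the origin to reduce the two-dimensional integral defining $\mu(B_O(\rho_O))$ to a one-dimensional integral in $r$, and then evaluate the resulting elementary antiderivative and expand asymptotically.

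First, I would observe that applying the hyperbolic law of cosines~\eqref{eqn:coshLaw} to the pair $(0,0)$ and $(r,\theta)$ gives $\cosh(d) = \cosh(0)\cosh(r) - \sinh(0)\sinh(r)\cos\theta = \cosh(r)$, so the hyperbolic distance from $O$ to the point with polar coordinates $(r,\theta)$ is exactly $r$. Hence $B_O(\rho_O)$ is rotationally symmetric and described in polar coordinates by $\{(r,\theta): 0 \le r \le \rho_O,\ 0 \le \theta < 2\pi\}$.

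Next, since $\theta$ is uniform on $[0,2\pi)$ independently of $r$, the joint density factors as $f(r,\theta) = f(r)/(2\pi)$, so integrating out $\theta$ over the full circle reduces the measure to the marginal in $r$:
\[
\mu(B_O(\rho_O)) = \int_0^{\rho_O} f(r)\, dr = \int_0^{\rho_O} \frac{\alpha \sinh(\alpha r)}{\cosh(\alpha R) - 1}\, dr = \frac{\cosh(\alpha \rho_O) - 1}{\cosh(\alpha R) - 1},
\]
using the antiderivative $\cosh(\alpha r)$ of $\alpha \sinh(\alpha r)$.

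Finally, I would expand both numerator and denominator via $\cosh(x) - 1 = \tfrac{1}{2}e^{x}\bigl(1 + O(e^{-x})\bigr)$. Since $\alpha R = 2\alpha \ln n + \alpha C \to \infty$, the denominator is $\tfrac{1}{2}e^{\alpha R}(1 + o(1))$, and taking the ratio yields $e^{-\alpha(R - \rho_O)}(1 + o(1))$, as claimed. There is no genuine obstacle; the only subtlety is that the asymptotic form of the numerator uses $\rho_O \to \infty$ implicitly (otherwise $\cosh(\alpha \rho_O)-1$ contributes a bounded multiplicative constant rather than the clean factor $e^{\alpha \rho_O}$), which is consistent with the regime in which the lemma will be invoked in the sequel.
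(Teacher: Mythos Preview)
The paper does not prove this lemma at all---it is quoted verbatim from~\cite[Lemma~3.2]{GPP12} and used as a black box---so there is nothing to compare against. Your direct computation of $\mu(B_O(\rho_O)) = (\cosh(\alpha\rho_O)-1)/(\cosh(\alpha R)-1)$ followed by the asymptotic expansion is correct and is in fact the natural proof; your caveat that the stated form tacitly assumes $\rho_O\to\infty$ (since for bounded $\rho_O$ the numerator does not match $\tfrac12 e^{\alpha\rho_O}$) is accurate and worth keeping in mind, though every invocation of the lemma in this paper is in that regime.
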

The following result gives an approximation for the mass
  of the intersection of two balls of ``reasonable size'', one of which
  being centered at the origin $O$ and the other one containing the origin.
\begin{lemma}\label{lem:muBallInterGen}
Let $C_{\alpha}=2\alpha/(\pi(\alpha-\frac12))$.
For $r_{A}\leq \rho_{A}$ and $\rho_{O}+r_{A}\geq \rho_{A}$,
\[
\mu(B_{A}(\rho_{A})\cap B_{O}(\rho_{O})) 
  = C_{\alpha}e^{-\alpha(R-\rho_{O})-\frac{1}{2}(\rho_{O}-\rho_A+r_A)} 
        + O(e^{-\alpha(R-\rho_{A}+r_{A})}).
\]
\end{lemma}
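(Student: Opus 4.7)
The plan is to fix $\theta_A = 0$ (by rotational invariance of $\mu$) and integrate the density over the intersection by parametrizing on the radius $r$ of the integration point $P$. For each $r \in [0,\rho_O]$, the set of admissible angles $\theta$ is either the entire circle or a symmetric arc, depending on whether the antipodal point $(r,\pi)$, which lies at hyperbolic distance $r+r_A$ from $A$, is inside $B_A(\rho_A)$. So the threshold radius is $r^{\star}=\rho_A - r_A$; it is nonnegative by the hypothesis $r_A\leq \rho_A$ and at most $\rho_O$ by the hypothesis $\rho_O + r_A \geq \rho_A$. Splitting the integral at $r^{\star}$ isolates a clean inner part and an annular part.

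The inner part $r\in[0, r^{\star}]$ integrates the full angular measure $2\pi$, so it equals $\mu(B_O(\rho_A - r_A))$, which by Lemma~\ref{lem:muBall} is $(1+o(1))e^{-\alpha(R-\rho_A+r_A)}$ and is absorbed into the stated error term.

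For the annular part $r\in (r^{\star}, \rho_O]$, Lemma~\ref{lem:aproxAngle} applies with $(a,b,c)=(r,r_A,\rho_A)$ and yields $\theta_{\rho_A}(r,r_A)=2e^{(\rho_A - r - r_A)/2}(1+\Theta(e^{\rho_A - r - r_A}))$. Plugging in the standard asymptotics $\sinh(\alpha r)=\tfrac12 e^{\alpha r}(1+o(1))$ and $\cosh(\alpha R)-1=\tfrac12 e^{\alpha R}(1+o(1))$, the leading contribution reduces to
\[
\frac{2\alpha}{\pi}\,e^{-\alpha R + (\rho_A - r_A)/2}\int_{r^{\star}}^{\rho_O} e^{(\alpha-1/2)r}\,dr.
\]
Since $\alpha>1/2$, the antiderivative evaluated at the upper endpoint $r=\rho_O$ yields exactly $C_\alpha\, e^{-\alpha(R-\rho_O)-(\rho_O - \rho_A + r_A)/2}$, the claimed leading term; evaluated at the lower endpoint, a direct calculation gives a constant multiple of $e^{-\alpha(R-\rho_A+r_A)}$, which joins the error.

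The remaining task is to control the second-order correction $\Theta(e^{\rho_A - r - r_A})$ inside the angle approximation. This multiplies the integrand by an extra factor of $e^{\rho_A - r - r_A}$, so the exponential factor $e^{(\alpha-1/2)r}$ is replaced by $e^{(\alpha-3/2)r}$. Since $\alpha<1<3/2$, this is a decaying exponential whose integral is controlled by its value at $r=r^{\star}$, giving once more a contribution of $O(e^{-\alpha(R-\rho_A+r_A)})$. The main obstacle is that $\Theta(e^{\rho_A - r - r_A})$ is only a \emph{bounded} (rather than small) factor near $r=r^{\star}$, so one must check that Lemma~\ref{lem:aproxAngle} gives a uniform statement down to that boundary; if not, I would segregate a constant-width transition strip just above $r^{\star}$ and bound its mass crudely by $\mu(B_O(r^{\star}+O(1)))$ via Lemma~\ref{lem:muBall}, yielding again $O(e^{-\alpha(R-\rho_A+r_A)})$ and completing the error bookkeeping.
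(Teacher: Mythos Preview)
Your proposal is correct and follows essentially the same route as the paper: split the intersection at the threshold radius $r^\star=\rho_A-r_A$, bound the inner ball via Lemma~\ref{lem:muBall}, and on the annulus use the angle approximation of Lemma~\ref{lem:aproxAngle} to reduce to the integrals $\int e^{(\alpha-1/2)r}\,dr$ (main term) and $\int e^{(\alpha-3/2)r}\,dr$ (error, using $\alpha<3/2$). Your extra caution about uniformity near $r=r^\star$ is unnecessary---the paper simply treats the $\Theta(e^{\rho_A-r-r_A})$ factor as uniformly bounded on the whole annulus---but the transition-strip workaround you sketch is also perfectly valid.
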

\begin{proof}
We want to bound (see Figure~\ref{fig:interiorInter}): 
\begin{figure}[h]
\begin{center}\ifpdf\input{intersect.pdf_t}\else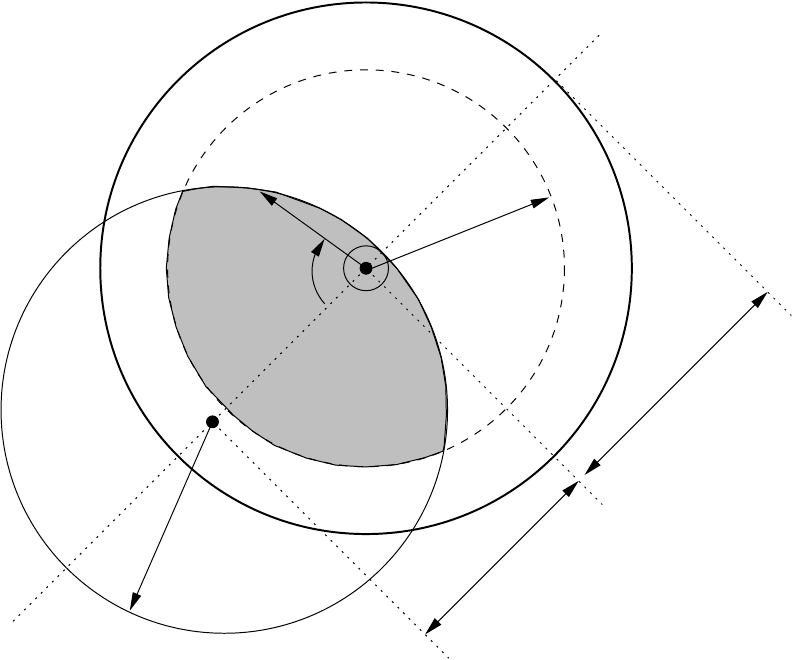\fi\end{center}\caption{the shaded area corresponds to $B_{A}(\rho_A)\cap B_{O}(\rho_0)$.}\label{fig:interiorInter}
\end{figure}
\[
\mu(B_{A}(\rho_{A})\cap B_{O}(\rho_{O})) 
   = \mu(B_{O}(\rho_{A}-r_A))+2\int_{\rho_{A}-r_{A}}^{\rho_{O}}\int_{0}^{\theta_{\rho_A}(r,r_{A})} \frac{f(r)}{2\pi} d\theta dr.
\]
Relying on the approximation of $\theta_{\rho_A}(r,r_{A})$ given 
  by Lemma~\ref{lem:aproxAngle}, 
\begin{align*}
& \mu(B_{A}(\rho_{A})\cap B_{O}(\rho_{O})) \\
& \qquad  = \mu(B_{O}(\rho_{A}-r_A))
      +\frac{\alpha e^{\frac{1}{2}(\rho_{A}-r_A)}}{\pi C(\alpha,R)} 
  \times
      \int_{\rho_{A}-r_{A}}^{\rho_{O}} 
      (e^{(\alpha-\frac12)r}-e^{-(\alpha+\frac12)r})(1+\Theta(e^{\rho_{A}-r-r_{A}}))dr. 
\end{align*}
  We will first solve the integral without the error term $\Theta(e^{\rho_{A}-r-r_{A}})$. Since $C_{\alpha}=2\alpha/(\pi(\alpha-\frac12))$, 
  and recalling that $C(\alpha,R)=\cosh(\alpha R)-1=
    \frac{1}{2}e^{\alpha R}+O(e^{-\alpha R})$,
  for this part we obtain
\begin{align*}
& \frac{\alpha e^{\frac{1}{2}(\rho_{A}-r_A)}}{\pi C(\alpha,R)}
      \Big(\frac{e^{(\alpha-\frac12)\rho_{O}}-e^{(\alpha-\frac12)(\rho_A-r_{A})}}{\alpha-\frac12}+O(1)\Big) \\
& \qquad  =  
        C_{\alpha}
        \Big(e^{-\alpha(R-\rho_{O})-\frac{1}{2}(\rho_{O}-\rho_A+r_A)}-e^{-\alpha(R-\rho_A+r_A)}
+\Theta(e^{\frac{1}{2}(\rho_A-r_A)-\alpha R})\Big) \\
& \qquad = 
 C_{\alpha}
 \Big(e^{-\alpha(R-\rho_{O})-\frac{1}{2}(\rho_{O}-\rho_A+r_A)}+O(e^{-\alpha(R-\rho_A+r_A)})\Big),
\end{align*}
where the last identity is because 
  $e^{\frac{1}{2}(\rho_{A}-r_{A})-\alpha R}=O(e^{-\alpha(R-\rho_{A}+r_{A})})$ for 
  $\alpha > \frac12$.
Now, for the error term, since $\alpha<\frac32$, we obtain
\[
\frac{\alpha e^{\frac{3}{2}(\rho_{A}-r_A)}}{\pi C(\alpha,R)}
      \Big(\frac{\Theta(e^{(\alpha-\tfrac{3}{2})\rho_{O}}-e^{(\alpha-\tfrac{3}{2})(\rho_A-r_{A})})}{\alpha - \tfrac{3}{2}}  + O(1) \Big) 
   = \frac{\alpha e^{\frac{3}{2}(\rho_{A}-r_A)}}{\pi C(\alpha,R)}O(1)
   = O(e^{-\alpha(R - \rho_A + r_A)}).
\]
Applying Lemma~\ref{lem:muBall} to $\mu(B_{O}(\rho_{A}-r_A))$, the desired conclusion follows.
\end{proof}

{From} now on, denote $R_{0}:=R/2$ and 
  $R_{i}:=Re^{-\alpha^{i}/2}$ for $i\geq 1$. 
Observe that $R_{i-1}\leq R_{i}$ for all $i$.
Next, we show that for $R/2<r_A\leq R-O(1)$, 
  a constant fraction of the mass of the intersection
  of the balls $B_{A}(R)$ and $B_{O}(R_{i})$ is explained by 
  the mass of the intersection of $B_{A}(R)$ with just the band
  $B_{O}(R_i)\setminus B_{O}(R_{i-1})$.

\begin{lemma}\label{lem:simplebd2}
Let $\xi>0$ be some constant and
 $i \geq 1$ be such that $R_i < R - \xi$. 
If  $R_{i} < r_{A} \leq R_{i+1}$, then 
\[
\mu(B_{A}(R) \cap \big(B_{O}(R_{i}) \setminus B_{O}(R_{i-1})\big)) 
   = \Omega(\mu(B_{A}(R) \cap B_{O}(R_{i}))),
\]
where the constant $C'=C'(\xi,\alpha)$ hidden inside 
  the asymptotic notation can be assumed
  nondecreasing as a function of $\xi$.
\end{lemma}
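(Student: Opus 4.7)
The plan is to apply Lemma~\ref{lem:muBallInterGen} with $\rho_A = R$ twice, once with $\rho_O = R_i$ and once with $\rho_O = R_{i-1}$, and write the target mass as the difference of the two resulting expressions. The hypotheses of Lemma~\ref{lem:muBallInterGen} are easily verified: $r_A \leq R_{i+1} < R = \rho_A$, and since $r_A \geq R_i \geq R/2$ and $R_{i-1} \geq R/2$ (which uses $\alpha < \ln 4$ for the bound $R e^{-\alpha/2} \geq R/2$), one has $\rho_O + r_A \geq R = \rho_A$ in both cases. Writing $M_j := \mu(B_A(R) \cap B_O(R_j))$ and simplifying the main exponent to $-(\alpha - \tfrac12)(R-R_j) - \tfrac12 r_A$, the lemma gives $M_j = C_\alpha e^{-(\alpha - 1/2)(R - R_j) - r_A/2} + O(e^{-\alpha r_A})$ for $j \in \{i-1, i\}$. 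Subtracting and factoring,
\[
M_i - M_{i-1} = C_\alpha e^{-(\alpha - 1/2)(R - R_i) - r_A/2}\bigl(1 - e^{-(\alpha - 1/2)(R_i - R_{i-1})}\bigr) + O(e^{-\alpha r_A}).
\]

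The heart of the argument is to show the factor $1 - e^{-(\alpha - 1/2)(R_i - R_{i-1})}$ is bounded below by a positive constant $c(\xi, \alpha)$, nondecreasing in $\xi$. This reduces to a uniform lower bound $R_i - R_{i-1} = \Omega(\xi)$ (with hidden constant depending only on $\alpha$) valid for every $i \geq 1$ satisfying $R_i < R - \xi$. The case $i = 1$ is immediate: $R_1 - R_0 = R(e^{-\alpha/2} - 1/2) = \Theta(R)$, since $\alpha < 1 < \ln 4$. For $i \geq 2$, I would write $R_i - R_{i-1} = R_i\bigl(1 - e^{-(1-\alpha)\alpha^{i-1}/2}\bigr)$ and apply the elementary inequality $1 - e^{-x} \geq x/2$ on $[0,1]$, reducing the task to a lower bound on $\alpha^{i-1}$. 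The hypothesis $R_i \leq R - \xi$ translates, via $-\ln(1 - \xi/R) \geq \xi/R$, into $\alpha^i \geq 2\xi/R$, and combined with the crude bound $R_i \geq R e^{-\alpha/2}$ this yields the claimed $R_i - R_{i-1} \geq c_1(\alpha)\,\xi$.

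It then remains to verify that the additive error $O(e^{-\alpha r_A})$ is negligible against the main term. The ratio of the error to the main term is $O(e^{-(\alpha - 1/2)(r_A - (R - R_i))})$, and since $r_A \geq R_i \geq R e^{-\alpha/2} > R/2$ we have $r_A - (R - R_i) \geq 2R_i - R = \Omega(R)$; hence this ratio is $e^{-\Omega(R)} = o(1)$ as $n \to \infty$. Putting the three pieces together delivers $M_i - M_{i-1} = \Omega(M_i)$ with the implicit constant being a function of $\xi$ and $\alpha$ that is nondecreasing in $\xi$ (both because $R_i - R_{i-1}$ grows with $\xi$ and because increasing $\xi$ only shrinks the set of admissible $i$).

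The main obstacle is the lower bound on $R_i - R_{i-1}$. The sequence $(R_i)$ converges to $R$ and its consecutive gaps shrink to zero as $i$ grows, so without leveraging the hypothesis the factor $1 - e^{-(\alpha - 1/2)(R_i - R_{i-1})}$ could degenerate. The constraint $R_i \leq R - \xi$ is precisely the quantitative lever that caps $i$ in terms of $\xi$ and $R$, forcing a uniform positive lower bound on the relevant gap.
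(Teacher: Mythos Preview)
Your proof is correct and follows essentially the same approach as the paper: apply Lemma~\ref{lem:muBallInterGen} twice (with $\rho_O=R_i$ and $\rho_O=R_{i-1}$), take the difference, and show the gap $R_i-R_{i-1}$ is bounded below using the hypothesis $R_i<R-\xi$ via $\alpha^i\geq 2\xi/R$. The only cosmetic differences are that the paper phrases the key step as bounding the ratio $M_i/M_{i-1}$ away from~$1$ rather than bounding $1-e^{-(\alpha-1/2)(R_i-R_{i-1})}$ from below, and uses a slightly different case split ($i\leq i_1$ versus $i>i_1$ for a constant $i_1(\alpha)$, rather than your $i=1$ versus $i\geq 2$); your treatment of the error term and the verification of the hypotheses of Lemma~\ref{lem:muBallInterGen} are also a bit more explicit than the paper's.
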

\begin{proof}
     Since $R_{i-1}\leq R_{i}$, we have that 
  $B_{O}(R_{i-1})\subseteq B_{O}(R_{i})$, so the left hand side of the 
  stated identity can be re-written as 
  $\mu(B_{A}(R)\cap B_{O}(R_{i}))-\mu(B_{A}(R)\cap B_{O}(R_{i-1}))$.
Now, observe that by Lemma~\ref{lem:muBallInterGen} applied with 
  $\rho_A=R$, $\rho_O=R_i$ (so $R_i + r_A > 2R_i \geq R$), 
\begin{equation}\label{eq:bigball}
\mu(B_{A}(R)\cap B_{O}(R_i)) 
  = (1+o(1))C_{\alpha}e^{-\alpha(R-R_i)-\frac{1}{2}(R_i-R+r_A)}, 
\end{equation}
where $C_{\alpha}=2\alpha/(\pi(\alpha-\frac12)$.
By the same argument, this time applied with $\rho_A=R$, $\rho_O=R_{i-1}$, 
we also have
\begin{equation}\label{eq:smallball}
\mu(B_{A}(R)\cap B_{O}(R_{i-1})) 
  = (1+o(1))C_{\alpha}e^{-\alpha(R-R_{i-1})-\frac{1}{2}(R_{i-1}-R+r_A)}. 
\end{equation}
It suffices to show that the ratio $\rho(R,i)$ between the right hand 
  sides of the expressions in~\eqref{eq:bigball} 
  and~\eqref{eq:smallball} is at least a constant.
We claim that the assertion regarding $\rho(R,i)$ holds.
Indeed, note that
\[
\rho(R,i) = (1+o(1))e^{(\alpha-\frac12)(R_{i}-R_{i-1})} =
  (1+o(1))e^{(\alpha-\frac12)R(e^{-\alpha^{i}/2}-e^{-\alpha^{i-1}/2})}, 
\]
and let $i_{1}=O(1)$ be large enough so that 
  $1-\alpha\geq \alpha^{i_1-1}/2$.
Since $\frac12<\alpha<1$, 
  $\beta:=e^{-\alpha^{i_{1}}/2}-e^{-\alpha^{i_{1}-1}/2}>0$ is a 
  constant such that if $i\leq i_1$, then 
  $\rho(R,i)\geq\rho(R,i_{1})=(1+o(1))e^{(\alpha-\frac12)\beta R}$.
Thus, the claim holds for $i\leq i_1$.

Using now $1-x \leq e^{-x} \leq 1-x+x^2/2$, by our choice of $i_1$, and recalling
  that $\alpha<1$, 
\[
R\Big(e^{-\alpha^{i}/2}-e^{-\alpha^{i-1}/2}\Big)
  \geq R\frac{\alpha^{i-1}}{2}\Big(1-\alpha-\frac{\alpha^{i-1}}{4}\Big) 
  \geq R\frac{\alpha^{i-1}}{2}\cdot\frac{1-\alpha}{2}.
\]
Since by assumption $R_i < R-\xi$, we also have 
  $\alpha^i \geq \frac{2\xi}{R}$, and thus  
  $\rho(R,i)\geq (1+o(1))e^{(\alpha-\frac12)(1-\alpha)\xi/(2\alpha)}$,
  finishing the proof.
\end{proof}

In order to simplify our proofs, we will make use of a technique known
as de-Poissonization, which has many applications in geometric
probability (see~\cite{Penrose} for a detailed account of the
subject). Throughout the paper we work with a Poisson point process on
the hyperbolic disc of radius $R$ and denote its point set by
$\calP$. Recall that $R=2\log n+C$ for $C \in \RR$. The
intensity function at polar coordinates $(r,\theta)$ for $0 \leq r < R$ 
  and $0 \leq \theta < 2\pi$ is equal to
\[
g(r,\theta) := \delta e^{R/2}f(r,\theta)
  = \delta e^{R/2}
  \frac{\alpha\sinh(\alpha r)}{2\pi C(\alpha,R)}
\]
with $\delta=e^{-C/2}$. 
Note that $\int_{r=0}^R \int_{\theta=0}^{2\pi} g(r,\theta) d\theta dr 
  = \delta e^{R/2}=n$, 
  and thus  $\mathbb{E}|{\calP}|=n.$

  The main advantage of defining $\calP$ as a Poisson point process is
  motivated by the following two properties: the number of points of
  $\calP$ that lie in any region $S \cap B_O(R)$ follows a Poisson
  distribution with mean given by $\int_S g(r,\theta) drd\theta=n
  \mu(S \cap B_O(R))$, and the number of points of $\calP$ in disjoint
  regions of the hyperbolic plane are independently distributed.
  Moreover, by conditioning $\calP$ upon the event $|{\calP}|=n$, we
  recover the original distribution.   
Note that for any $k$, by standard estimates for the Poisson 
  distribution, we have 
   \begin{align}\label{lem:Poisson1}
  \Pr(|{\calP}|=k) \leq \Pr(|{\calP}|=n)=O(1/\sqrt{n}).
  \end{align}
In some of the arguments below, we will add a set of vertices $\calQ$ 
  of size $|\calQ|=m$, that are all chosen independently according to 
  the same probability distribution as every vertex in 
  $G_{\alpha,C}(n)$, we add them to $\calP$ and consider $\calP \cup \calQ$. 
Throughout this work all vertices named by $Q$, 
  or in case there are more, $Q_1,\ldots,Q_m$ are vertices added in this way.
  For $m=o(n^{1/2})$, by Stirling's approximation, we have
\begin{equation}\label{lem:Stirling}
\Pr(|{\calP}|=n-m)= e^{-n}\frac{n^{n-m}}{(n-m)!} 
  = e^{-m}\Big(\frac{n}{n-m}\Big)^{n-m}\Theta(n^{-1/2}) 
  = e^{-m}\Big(1+\frac{m}{n-m}\Big)^{n-m}\Theta(n^{-1/2}). 
\end{equation} 
Using that $e^{x/(1+x)}\leq 1+x$ if $x>-1$ and recalling that 
  $m=o(\sqrt{n})$, we get 
\[
\Pr(|{\calP}|=n-m)\geq e^{-m}e^{(n-m)\frac{m}{n}}\Theta(n^{-1/2})  
  = e^{-o(1)}\Theta(n^{-1/2})=\Omega(1/\sqrt{n}).
\]
Thus, for such an $m$, by~\eqref{lem:Stirling},
  \begin{align}\label{lem:Poisson}
    \Pr(|{\calP}|=n-m)=\Theta(1/\sqrt{n}).
  \end{align}
By conditioning upon $|{\calP}|=n-m$, we recover the original 
  distribution of $n$ points. 
Moreover, for any $m=o(\sqrt{n})$, any event
  holding in $\calP$ or $\calP \cup \calQ$
  with probability at least $1-o(f_n)$ must hold in
  the original setup with probability at least $1-o(f_n \sqrt n)$, and
  in particular, any event holding with probability at least
  $1-o(1/\sqrt{n})$ holds asymptotically almost surely (a.a.s.), that
  is, with probability tending to $1$ as $n \to \infty$, in the
  original model. We identify below points of $\calP$ with
  vertices. We prove all results below for the Poisson model and then
  transform the results to the original model.

\section{The upper bound on the diameter}\label{sec:upper}
In this section we prove the main result of this work, a polylogarithmic upper bound in $n$ on the
diameter $G_{\alpha,C}(n)$ that holds asymptotically almost surely.

\medskip
We begin by showing that there cannot be long paths with all vertices being 
  close to the boundary (i.e.,~at distance $R-O(1)$ from the origin $O$).
\begin{lemma}\label{lem:nolongpath}
Let $\xi > 0$. 
Let $Q \in \calQ$ be a vertex added, 
  and suppose $r_Q > R-\xi$.
There is a constant $C'=C'(\xi,C)$ such that 
  with probability at least $1-o(n^{-3/2})$, there is no path
  $Q=:v_0,v_1,\ldots,v_k$, with $k \geq C' \log n$ and 
  $r_{v_i} > R-\xi$ for $i=1,\ldots,k$.
\end{lemma}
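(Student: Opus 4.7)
My plan is to exploit the fact that vertices with $r > R-\xi$ have a very restricted angular reach for adjacency, so that the induced subgraph on the annulus $\{r : R-\xi < r < R\}$ is essentially one-dimensional and its connected components therefore have exponentially decaying size. By Lemma~\ref{lem:aproxAngle} (in the form of Remark~\ref{rem:aproxAngle}), any two adjacent points $u, v$ with $r_u, r_v > R-\xi$ satisfy $r_u + r_v > 2(R-\xi)$ and hence
\[
|\theta_u - \theta_v| \leq 2 e^{(R - r_u - r_v)/2}(1+o(1)) \leq \phi, \qquad \phi := 2 e^\xi e^{-R/2}(1+o(1)) = \frac{2 e^\xi e^{-C/2}}{n}(1+o(1)).
\]
I would then introduce a ``super-graph'' $G^*$ on the $\calP$-vertices lying in the annulus together with $Q$, in which two points are joined whenever their angular distance is at most $\phi$. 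The annulus-induced subgraph of the original graph is a subgraph of $G^*$, so any forbidden path lies entirely in the $G^*$-component of $Q$, and it suffices to show that this component has size at most $C' \log n$ with probability $1 - o(n^{-3/2})$.

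Next I would use the Poisson structure. The angular coordinates of the $\calP$-vertices lying in the annulus form a homogeneous Poisson point process on $[0, 2\pi)$ with rate $\lambda := n(1 - e^{-\alpha\xi})/(2\pi) \cdot (1+o(1))$, as follows from integrating the intensity radially and applying Lemma~\ref{lem:muBall}. By the memoryless property of Poisson processes, the distances from $\theta_Q$ to the nearest $\calP$-point on either side, together with the subsequent gaps between consecutive $\calP$-points in angular order, are i.i.d.~$\mathrm{Exp}(\lambda)$; each such gap exceeds $\phi$ with probability $p := e^{-\lambda \phi}$. Since
\[
\lambda \phi = \frac{e^\xi (1 - e^{-\alpha\xi})}{\pi e^{C/2}}(1+o(1))
\]
is a positive constant depending only on $\xi$ and $C$, so is $p \in (0,1)$.

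The $G^*$-component of $Q$ consists of $Q$ together with the runs of consecutive $\calP$-neighbors on either side that are linked by a chain of gaps $\leq \phi$. The number $N_R$ of $\calP$-vertices joined to $Q$ on the right in $G^*$ is therefore geometrically distributed with $\Pr[N_R \geq j] = (1 - p)^j$, and the left-side count $N_L$ has the same distribution independently. Hence
\[
\Pr[|G^*\text{-comp}(Q)| \geq m] \leq \Pr[N_L \geq (m-1)/2] + \Pr[N_R \geq (m-1)/2] \leq 2(1 - p)^{\lceil (m-1)/2 \rceil},
\]
and setting $m = C' \log n + 1$ yields a bound of order $n^{-(C'/2)\log(1/(1-p))}$. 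Choosing $C' = C'(\xi, C)$ larger than $3/\log(1/(1-p))$ renders this $o(n^{-3/2})$. Since a path of length $k \geq C' \log n$ starting at $Q$ with all vertices in the annulus would force $|G^*\text{-comp}(Q)| \geq k + 1 > C' \log n$, the desired conclusion follows.

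The main obstacle, beyond cleanly formalizing the Poisson thinning and projection onto the angular coordinate, is accounting for potential additional vertices of $\calQ \setminus \{Q\}$ lying in the annulus: extra $\calQ$-points inside the relevant angular window could merge two $\calP$-runs into a single $G^*$-component and thus inflate $|G^*\text{-comp}(Q)|$. Provided $|\calQ|$ is small (as will be the case when the lemma is invoked), this inflation is absorbed into the constant $C'$; otherwise a mild refinement of the gap analysis applied to the combined process $\calP \cup \calQ$ suffices.
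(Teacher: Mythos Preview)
Your proof is correct and rests on the same core observation as the paper's---that adjacency within the annulus $\{r>R-\xi\}$ forces an angular step of $O(1/n)$, so the induced subgraph is essentially one-dimensional---but the execution differs. The paper discretises the disc into $\Theta(n)$ sectors of angle $\Theta(1/n)$, argues that any annulus path must occupy a block of consecutive sectors, bounds the probability that $C'''\log n$ consecutive sectors are all non-empty by $(1-e^{-C''})^{C'''\log n}$, and then separately invokes a Chernoff bound to cap the number of vertices in any such block by $O(\log n)$. Your approach instead works directly with the continuous angular Poisson process: the $G^*$-component of $Q$ is the run of consecutive $\calP$-points whose successive angular gaps are all at most $\phi$, and the length of each run is geometric with parameter $p=e^{-\lambda\phi}$. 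This collapses the paper's two steps (bounding the angular extent, then bounding the vertex count) into a single geometric tail bound on the component size, which is a bit cleaner; the sectorisation buys the paper nothing essential here beyond avoiding the (routine) justification that gaps from a fixed point in a Poisson process on the circle behave like i.i.d.\ exponentials over an arc of length $o(1)$. Your closing caveat about extra $\calQ$-vertices is well taken but, as you note, harmless in the regimes where the lemma is invoked (and the paper's own proof does not address it explicitly either).
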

\begin{proof}
First, note that by  Remark~\ref{rem:aproxAngle}, 
  for two vertices $v_i,v_j$ with
  $r_{v_i}, r_{v_j} > R-\xi$ and $d(v_i,v_j) \leq R$, we must have
  $|\theta_{v_i} - \theta_{v_j}| \leq (1+o(1))2e^{-R/2+\xi}=C''/n$ for
  $C''=(2+o(1))e^{-C/2 + \xi}$. 
Partition the disc of radius $R$ into
  $\Theta(n)$ equal sized sectors of angle $2\pi C''/n$ 
  and order them
  counterclockwise. Note that any  path containing only vertices $v$ with $r_v > R-\xi$ satisfies the following property: if it contains a vertex $v_i$ in a sector $S_i$
  and a vertex $v_j$ in a sector $S_j$, either all sectors between $S_i$ and $S_j$ in the counterclockwise ordering or all sectors between $S_i$ and $S_j$ in the clockwise ordering have to contain at least one vertex from the path. 
By symmetry, for each sector, 
  the expected number of vertices inside the
  sector is $C''$, and therefore this is also an upper bound for the
  expected number of vertices $v$ inside this sector with the
  additional restriction of $r_v > R-\xi$. Thus, the probability of
  having at least one vertex inside a sector is at most $1-e^{-C''} < 1.$ 
The probability to have at least one vertex in $C''' \log n$
  given consecutive sectors starting from the sector that contains $Q$ is thus at most $2(1-e^{-C''})^{C'''\log n} =
  o(n^{-3/2})$ for $C'''=C'''(C'')$ sufficiently large. Thus, with probability at least $1-o(n^{-3/2})$, any such path starting at $Q$ contains only 
  vertices from at most $C''' \log n$ sectors. By Chernoff bounds together with a union bound, for $C'''$ large enough, with probability at least $1-o(n^{-3/2})$, any consecutive set of $C''' \log n$ sectors contains $O(\log n)$ vertices, and the statement follows.
\end{proof}

Because of the peculiarities of hyperbolic space, two points
  close to the boundary $R$ that are within distance at most $R$ from
  each other subtend a small angle at the origin. 
In fact, the angle is smaller the closer both points are to the boundary. 
This fact, coupled with the previous lemma, implies that with high probability
  there is no path all of whose vertices are within a constant distance of 
  the boundary whose extreme vertices subtend a large angle at the origin.
Formally, we have the following corollary:
\begin{corollary}\label{cor:nolongpath}
Let $\xi > 0$. 
Let $Q \in \calQ$ be a vertex added, 
  and suppose $r_Q > R-\xi$, and let   $Q=:v_0,v_1,\ldots,v_k$ be a path with $r_{v_i} > R-\xi$ for $i=1,\ldots,k$.
Then,  with probability at least $1-o(n^{-3/2})$, for any $0 \leq i \leq j  \leq k$,
$$
|\theta_{v_i}-\theta_{v_j}| = O(\log n/n).
$$
\end{corollary}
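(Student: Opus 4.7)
The plan is to deduce the corollary directly from Lemma~\ref{lem:nolongpath} and Remark~\ref{rem:aproxAngle}. Specifically, Lemma~\ref{lem:nolongpath} ensures that with probability at least $1-o(n^{-3/2})$ any path of the described form satisfies $k \leq C' \log n$ for some constant $C'=C'(\xi,C)$; I will work on this event throughout.

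On this event, for each consecutive pair $(v_l, v_{l+1})$ along the path, both endpoints satisfy $r_{v_l}, r_{v_{l+1}} > R-\xi > R/2$ (for $n$ large enough) and $d(v_l, v_{l+1}) \leq R$ since they are adjacent. Remark~\ref{rem:aproxAngle}, used as an upper bound with $d=R$, then yields
\[
|\theta_{v_l} - \theta_{v_{l+1}}| \leq (2+o(1))\, e^{-R/2+\xi} = \frac{C''}{n},
\]
for some constant $C''=C''(\xi,C)$, which is precisely the per-edge angular estimate already exploited inside the proof of Lemma~\ref{lem:nolongpath}.

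Summing these per-edge bounds via the triangle inequality for angular distance on the circle gives, for any $0 \leq i \leq j \leq k$,
\[
|\theta_{v_i} - \theta_{v_j}| \leq \sum_{l=i}^{j-1} |\theta_{v_l} - \theta_{v_{l+1}}| \leq k \cdot \frac{C''}{n} = O\!\left(\frac{\log n}{n}\right).
\]
The only point requiring a sanity check is that the triangle inequality on the circle applies without wrap-around, but since the total summed angular displacement is $O(\log n/n) = o(1)$, the path cannot wind around the origin, so the telescoping bound is valid. The substantive content has already been established inside Lemma~\ref{lem:nolongpath} (the bound on $k$ and the sector-based control of the path), so the corollary reduces to an elementary summation and I do not expect any serious additional obstacle.
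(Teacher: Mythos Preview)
Your proof is correct and follows essentially the same route as the paper: invoke Lemma~\ref{lem:nolongpath} to bound $k=O(\log n)$, use the per-edge angular bound $|\theta_{v_l}-\theta_{v_{l+1}}|=O(1/n)$ from Remark~\ref{rem:aproxAngle} (equivalently, from the proof of Lemma~\ref{lem:nolongpath}), and conclude via the triangle inequality for angles. Your additional remark about wrap-around is a harmless extra sanity check.
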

\begin{proof}
By Lemma~\ref{lem:nolongpath}, with probability at least $1-o(n^{-3/2})$, $k=O(\log n)$. By the proof of Lemma~\ref{lem:nolongpath}, for any two $v_i,v_j$ with
  $r_{v_i}, r_{v_j} > R-\xi$ and $d(v_i,v_j) \leq R$, we must have
  $|\theta_{v_i} - \theta_{v_j}| = O(1/n)$. The corollary follows by the triangle inequality for angles.
\end{proof}
Recall that $C_0=2/(\frac12 - \frac34 \alpha + \frac{\alpha^2}{4})$ and note that $C_0 > 8$. Define throughout the whole section 
  $i_{0}:=\log_{\alpha} ((2C_{0}\log R)/R)$.  Note that
  $R_{i_0}=Re^{-\alpha^{i_0}/2}=R(1-(1+o(1))\frac{\alpha^{i_0}}{2})=R-(1+o(1))C_0\log R$.

As already pointed out, the farther from the origin two vertices within distance at most $R$ from each other are, the smaller the angle they subtend at the origin.
In particular, for any two adjacent vertices $v_i,v_{i+1}$ with $r_{v_i},r_{v_{i+1}} > R_{i_0}$, by Remark~\ref{rem:aproxAngle}, 
\begin{equation}\label{eq:anglebound}
|\theta_{v_i} - \theta_{v_{i+1}}| \leq (2+o(1))e^{\frac12(R-r_{v_i}-r_{v_{i+1}})} \\
  \leq (2+o(1))e^{\frac12(R-2R_{i_0})} 
  =    e^{-\frac12 R}R^{C_0(1+o(1))}=O(\tfrac{1}{n}\log^{C_0+o(1)} n). 
\end{equation}

For a vertex $v \in \calP$ with $r_v > R_{i_0}$, let $\ell$ be such that 
  $R_{\ell} < r_{v} \leq R_{\ell+1}$.
Define a \emph{center path} from $v$ to be a sequence of vertices 
  $v=:w_0,w_1,\ldots,w_s$ such that $d(w_i,w_{i+1}) \leq R$, 
  $R_{\ell-i} < r_{w_i} \leq R_{\ell-i+1}$ for $0 \leq i \leq s-1$, and 
  $R_{i_0-1} < r_{w_s} \leq R_{i_0}$. In words,  $w_0,w_1,\ldots,w_s$ is a center path 
  from $v$ provided $w_0=v\in B_{O}(R_{\ell+1})\setminus B_{O}(R_{\ell})$,
  $w_{s}\in B_{O}(R_{i_0})$,
  and for every $0<i<s$ the vertex $w_i$ is in the band
  $B_{O}(R_{\ell-i+1})\setminus B_{O}(R_{\ell-i})$ around the origin; note that 
  each band is closer to the origin than the previous one.

We now establish the fact that 
  if a vertex is ``far'' from the origin but some constant distance 
  away from the boundary, then its probability of connecting
  via a center path
  to a vertex inside $B_{O}(R_{i_0})$ is 
  at least a constant.
\begin{lemma}\label{lem:centerpath}
Let $\xi' > 0$ and $Q \in \calQ$ be a vertex added.
Suppose that $R_{\ell} < r_Q \leq R_{\ell+1}$ for $\ell \geq i_0$ and
  $R_{\ell} \leq R-\xi'$.  
Then, with probability bounded away from $1$, there exists no center path 
  $Q=:\omega_0,\ldots,\omega_s$
  from~$Q$.
\end{lemma}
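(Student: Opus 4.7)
The plan is to build a center path from $Q$ greedily, exploiting the independence of the Poisson point process across disjoint bands. For $i=1,\ldots,s$ (with $s=\ell-i_0+1$), set $\mathcal{B}_i := B_O(R_{\ell-i+1})\setminus B_O(R_{\ell-i})$; these bands are pairwise disjoint, so the restrictions of $\calP$ to them are independent Poisson processes. Starting from $\omega_0=Q$, iteratively search for a point $\omega_i\in\calP\cap\mathcal{B}_i\cap B_{\omega_{i-1}}(R)$; if any exists, designate one as $\omega_i$ (say, by smallest angular distance from $\omega_{i-1}$). Successfully producing $\omega_1,\ldots,\omega_s$ yields a center path, so it suffices to lower-bound the success probability by a positive constant.

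Let $\Lambda_i$ denote the expected number of points of $\calP$ in the target region at step $i$, viewed as a function of $\omega_{i-1}$. Applying Lemma~\ref{lem:simplebd2} (with its index taken to be $\ell-i+1$, valid since the center-path band constraint gives $r_{\omega_{i-1}}\in(R_{\ell-i+1},R_{\ell-i+2}]$ and $R_{\ell-i+1}\leq R_\ell\leq R-\xi'$) followed by Lemma~\ref{lem:muBallInterGen} (with $\rho_A=R$, $\rho_O=R_{\ell-i+1}$, for which $\rho_O+r_{\omega_{i-1}}\geq R$ holds because $R_{\ell-i+1}\geq R/2$), the mass of the target region is $\Omega(e^{-\alpha(R-R_{\ell-i+1})-(R_{\ell-i+1}-R+r_{\omega_{i-1}})/2})$. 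Bounding $r_{\omega_{i-1}}\leq R_{\ell-i+2}$ (worst case), multiplying by $n=e^{(R-C)/2}$, and expanding $R_j=R-R\alpha^j/2+O(R\alpha^{2j})$, the three leading $O(R)$ contributions cancel exactly, leaving
\[
\Lambda_i \;\geq\; c_1 \exp\!\Big(\tfrac{R\alpha^{\ell-i+1}(1-\alpha)}{4}-\tfrac{C}{2}\Big) \;\geq\; c_1\exp\!\Big(\tfrac{\xi'(1-\alpha)}{2\,\alpha^{\,i-1}}-\tfrac{C}{2}\Big),
\]
where the second bound uses $R\alpha^\ell\geq 2\xi'$ (a consequence of $R_\ell\leq R-\xi'$). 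Thus $\Lambda_i$ grows \emph{doubly exponentially} in $i$.

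Because the bands are disjoint, step $i$ succeeds with probability at least $p_i:=1-e^{-\Lambda_i}$ conditional on $\omega_1,\ldots,\omega_{i-1}$. Choose an integer $k_0=k_0(\xi',\alpha,C)=O(1)$ large enough that $\Lambda_i\geq 1$ for all $i>k_0$. For $i>k_0$, the double-exponential growth makes $\sum_{i>k_0} e^{-\Lambda_i}$ converge (it is dominated by its first term) to a constant $\sigma=\sigma(\xi',\alpha,C)$, so $\prod_{i>k_0}^{s}p_i\geq e^{-2\sigma}>0$. For $i\leq k_0$, each $\Lambda_i\geq\Lambda_1\geq c_1 e^{\xi'(1-\alpha)/2-C/2}>0$ is a positive constant, so $\prod_{i\leq k_0}p_i\geq(1-e^{-\Lambda_1})^{k_0}>0$. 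Multiplying yields a uniformly positive lower bound on the success probability of the greedy construction, and hence on the probability that a center path from $Q$ exists; equivalently, the probability that \emph{no} center path exists is bounded away from $1$.

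The main obstacle is the mass computation that produces the formula for $\Lambda_i$: the three $O(R)$ terms $(1-\alpha)R$, $(\alpha-\tfrac12)R_{\ell-i+1}$, and $-R_{\ell-i+2}/2$ have coefficients summing to zero, so one must expand $R_j$ to next order in $\alpha^j$ to expose the genuine exponent $R\alpha^{\ell-i+1}(1-\alpha)/4$. It is this quantity whose double-exponential growth in $i$ renders the union bound over the tail benign, so that only $O(1)$ initial steps, each succeeding with positive constant probability, need to be handled separately.
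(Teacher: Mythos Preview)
Your proof is correct and follows essentially the same approach as the paper: both greedily construct a center path band by band, invoke Lemma~\ref{lem:simplebd2} together with Lemma~\ref{lem:muBallInterGen} to lower-bound the mass of each target region, and exploit the disjointness of the bands $\mathcal{B}_i$ to obtain conditional independence across steps. The one notable difference is in the final bookkeeping. The paper upper-bounds the failure probability by the union bound $\Pr(\mathcal{C})\leq\sum_i\Pr(\setCompl{\calE_i})$ and must then argue this sum is strictly below $1$, which in the paper's write-up forces $\xi'$ to be taken sufficiently large. You instead lower-bound the success probability by the product $\prod_i p_i$, split off the first $k_0=O(1)$ factors (each a positive constant), and use the doubly-exponential growth of $\Lambda_i$ to control the tail. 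Your product argument is slightly cleaner and has the mild advantage of yielding the conclusion for every fixed $\xi'>0$ as the lemma is stated, not only for $\xi'$ large; this does not affect the downstream use of the lemma, since $\xi'$ is a free parameter in Lemma~\ref{lem:main}.
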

\begin{proof}
Denote by $\calE_0$ the event that there exists one vertex of $\calP$ that 
  belongs to $B_{w_0}(R)\cap (B_{O}(R_{\ell})\setminus B_{O}(R_{\ell-1}))$. 
By Lemma~\ref{lem:simplebd2}, 
\[
\mu(B_{w_0}(R)\cap (B_{O}(R_{\ell})\setminus B_{O}(R_{\ell-1}))) 
  = \Omega(\mu(B_{w_0}(R)\cap B_O(R_{\ell}))).
\]
By  Lemma~\ref{lem:muBallInterGen} (applied with
  $\rho_A=R$, $\rho_O=R_{\ell}$, and $R_{\ell} < r_A=r_{w_0} \leq R_{\ell+1}$), we have
\begin{align*}
\mu(B_{w_0}(R)\cap B_O(R_{\ell})) 
  = \frac{2\alpha}{\pi(\alpha - \frac12)}\Big(e^{-(\alpha - \frac12)(R-R_{\ell})-\frac12 r_{w_0}}+O(e^{-\alpha r_{w_0}}) \Big) 
  =\Theta(e^{-(\alpha - \frac12)(R-R_{\ell})-\frac12 r_{w_0}}),
\end{align*}
  where the last identity follows because 
   $r_{w_0}>  R-R_{\ell}$.
 Now, since $1-e^{-x} \leq x$, we have $R-R_{\ell}=R(1-e^{-\alpha^{\ell}/2}) \leq R\alpha^{\ell}/2$, and since for $x=o(1)$, $e^{-x}=(1+o(1))(1-x)$, we also have $r_{w_0} \leq R_{\ell+1}=Re^{-\alpha^{\ell+1}/2}= R(1-(1+o(1))\alpha^{\ell+1}/2)$. Thus
\[
\Pr(\setCompl{\calE_0}) = \exp(-\Omega(ne^{-(\alpha-\frac12)(R-R_{\ell})-\frac12 r_{w_0}}))
   =\exp(-\Omega(ne^{-(\alpha-\frac12)\frac{R}{2}\alpha^{\ell}-(1-(1+o(1))\frac{R}{4}\alpha^{\ell+1})})).
\]
  Recalling that $e^{-R/2}=\Theta(1/n)$, we have 
   \begin{equation}\label{eq:nopath}
   \Pr(\setCompl{\calE_0}) 
    = \exp(-\Omega(e^{-(\alpha-\frac12)\frac{R}{2}\alpha^{\ell} + (1+o(1))\frac{R}{4}\alpha^{\ell+1}})). 
   \end{equation}
  Note that if $\calE_0$ holds, then a vertex inside the desired region is found. Assuming the existence of a vertex $w_i$ with $R_{\ell-i} < r_{w_i} \leq R_{\ell-i+1}$, we continue inductively for $i=1,\ldots,s-1=\ell-i_0$ in the same way: we define the event $\calE_i$ that there exists one element of $\calP$ that belongs to  $B_{w_i}(R)\cap (B_{O}(R_{\ell-i})\setminus B_{O}(R_{\ell-i-1}))$. By the same calculations (noting that Lemma~\ref{lem:muBallInterGen}  can still be applied and $R_{\ell-i}+r_{w_i} > R$ still holds), we obtain
     $$\Pr(\setCompl{\calE_i}) = \exp(-\Omega(e^{-(\alpha-\frac12)\frac{R}{2}\alpha^{\ell-i} + (1+o(1))\frac{R}{4}\alpha^{\ell-i+1}})).
   $$
 Denote by $\cal{C}$ the event of not having a center path, we have thus by independence of the events
 $$
\Pr({\cal{C}}) \leq \Pr(\setCompl{\calE_0}) +\sum_{i=1}^{s-1}\Pr(\setCompl{\calE_i} | \calE_0,\ldots,\calE_{i-1}) = \sum_{i=0}^{s-1}\Pr(\setCompl{\calE_i}). 
 $$
Hence,
\begin{align*}
& \Pr({\cal{C}}) \leq\sum_{i=0}^{\ell-i_0}\exp(-\Omega(e^{-(\alpha-\frac12)\frac{R}{2}\alpha^{\ell-i} + (1+o(1))\frac{R}{4}\alpha^{\ell-i+1}}))
 \leq \sum_{i=0}^{\ell-i_0}e^{-\Omega(1-(\alpha-\tfrac12)\frac{R}{2}\alpha^{\ell-i}+(1+o(1))\frac{R}{4}\alpha^{\ell-i+1})} \\
& \qquad \leq \sum_{i=0}^{\ell-i_0}e^{-\Omega(1+(1+o(1))\frac{R}{4}\alpha^{\ell-i}(1-\alpha))} 
  \leq \sum_{i=0}^{\ell-i_0}e^{-C'-(1+o(1))C'\frac{R}{4}\alpha^{\ell-i}(1-\alpha)}, 
\end{align*}
where $C'=C'(\xi') > 0$ is the constant hidden in the asymptotic notation of Lemma~\ref{lem:simplebd2}.
Clearly, the closer $Q$ is to the boundary, the more difficult it is to find a center path, and we may thus assume that $R_{\ell } < r_{Q} \leq R_{\ell+1}$ is such that $R_{\ell+1}>R-\xi'$. 
 Then, noting that $R_{\ell+1}=Re^{-\alpha^{\ell+1}/2}>R-\xi'$ implies that $\ell=\log_{\alpha} \big((1+o(1))2\xi'/R\big)$ must hold. Plugging this into the previous sum we 
get 
$$
 \Pr({\cal{C}}) \leq e^{-C'}\sum_{i \geq 0} e^{-(1+o(1))C'\frac{\xi'(1-\alpha)}{2\alpha^i}}.
$$
Clearly, since $\alpha^{-1} > 1$, the sum converges. 
Note that the constant $C'$ coming from Lemma~\ref{lem:simplebd2} 
  is nondecreasing as a function of $\xi'$. 
Hence, by choosing $\xi'=\xi'(C',\alpha)$ big enough, the sum is less than 
  $1$, and the statement follows.
\end{proof}
Define $j_0=j_0(\alpha)\geq 1$ to be a constant sufficiently large
  so that $e^{-\alpha^{j}/2}\leq 1-(1-\frac{1-\alpha}{2})\frac{\alpha^{j}}{2}$ 
  for $j \geq j_0$ (note that such $j_0$ exists because 
  $e^{-x}\leq 1-x+x^2$ if $|x|\leq 1$).

The following lemmas will show that for vertices $v$ with 
  $r_v \leq R_{i_0}$ the probability of not connecting to a vertex 
  in $B_O(R/2)$  can be bounded by a much smaller expression
  than the analogous 
  probability bound for vertices close to the boundary $R$
  given by Lemma~\ref{lem:centerpath}.
First, we consider the case of vertices in the band 
  $B_{O}(R_{j})\setminus B_{O}(R_{j-1})$ for $j_0\leq j\leq i_0$,
  and show that w.e.p.~they have neighbors in the next band closer
  to the origin.
Formally, we establish the following result, whose proof argument is 
  reminiscent of that of Lemma~\ref{lem:centerpath}, although the 
  calculations involved are different.
\begin{lemma}\label{lem:farcenter}
Let $Q \in \calQ$ be a vertex added such that 
  $R_{j} < r_Q \leq R_{j+1}$  with $j_0 \leq j \leq {i_0}$. 
W.e.p., 
$$
\big( B_{Q}(R) \cap (B_O(R_j) \setminus B_O(R_{j-1})\big) 
  \cap \calP \neq \emptyset.
$$
\end{lemma}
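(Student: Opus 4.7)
The plan is to apply the Poisson tail bound to the set $S := B_Q(R) \cap (B_O(R_j) \setminus B_O(R_{j-1}))$: since $\Pr(\calP \cap S = \emptyset) = \exp(-n\mu(S))$, it suffices to show that $n\mu(S) = \omega(\log n)$. First, because $R_j \leq R_{i_0} = R - (1+o(1))C_0 \log R$ stays at distance at least any fixed $\xi > 0$ from $R$ for large $n$, Lemma~\ref{lem:simplebd2} applies and yields $\mu(S) = \Omega(\mu(B_Q(R) \cap B_O(R_j)))$. In turn, I would invoke Lemma~\ref{lem:muBallInterGen} with $\rho_A = R$ and $\rho_O = R_j$ (its hypothesis $R_j + r_Q \geq R$ holds because $R_j > R/2$ for $\alpha < 1$) to obtain
$$
\mu(B_Q(R) \cap B_O(R_j)) = \Theta\!\left(e^{-(\alpha - \tfrac12)(R - R_j) - r_Q/2}\right).
$$

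Next I would bound the exponent from above using two elementary inequalities. From $1 - e^{-x} \leq x$, one gets $R - R_j \leq R\alpha^j/2$. From the defining inequality of $j_0$ applied at index $j+1$, one gets $R_{j+1} \leq R - R(1+\alpha)\alpha^{j+1}/4$, and therefore $r_Q/2 \leq R_{j+1}/2 \leq R/2 - R(1+\alpha)\alpha^{j+1}/8$. Collecting the two contributions proportional to $R\alpha^j$ (using $\alpha^{j+1} = \alpha \cdot \alpha^j$) gives
$$
(\alpha - \tfrac12)(R - R_j) + r_Q/2 \leq R/2 - R\alpha^j\Big(\tfrac14 - \tfrac{3\alpha}{8} + \tfrac{\alpha^2}{8}\Big) = R/2 - R\alpha^j/C_0,
$$
where the final equality is exactly the definition $C_0 = 2/(\tfrac12 - \tfrac{3\alpha}{4} + \tfrac{\alpha^2}{4})$. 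This algebraic identity is the heart of the argument and is, in fact, the very reason $C_0$ is defined the way it is in the main theorem.

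Finally, since $n e^{-R/2} = e^{-C/2} = \Theta(1)$, the step above yields $n\mu(S) = \Omega(e^{R\alpha^j/C_0})$. The hypothesis $j \leq i_0 = \log_\alpha(2C_0 \log R/R)$ forces $\alpha^j \geq 2C_0\log R/R$, so $R\alpha^j/C_0 \geq 2\log R$, and hence $n\mu(S) = \Omega(R^2) = \Omega(\log^2 n)$. Consequently the probability that $S$ contains no point of $\calP$ is at most $\exp(-\Omega(\log^2 n)) = e^{-\omega(\log n)}$, which is precisely w.e.p. The main obstacle here is not conceptual but one of careful bookkeeping: the simpler asymptotic approximation $e^{-x} = (1+o(1))(1-x)$ used in the proof of Lemma~\ref{lem:centerpath} would still yield a bound of the form $e^{c R\alpha^j}$ in the exponent, but to pin down the constant as $1/C_0$ and thereby control how close to the boundary the argument still works (through the cutoff $i_0$), the sharper $j_0$ inequality is what is needed.
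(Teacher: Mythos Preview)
Your proof is correct and follows essentially the same route as the paper: apply Lemma~\ref{lem:simplebd2} to reduce to $\mu(B_Q(R)\cap B_O(R_j))$, evaluate that via Lemma~\ref{lem:muBallInterGen}, bound the exponent using $R-R_j\le R\alpha^j/2$ and the $j_0$ inequality at index $j{+}1$, and plug in $j=i_0$ to get $n\mu(S)=\Omega(\log^2 n)$. The only cosmetic difference is that the paper finishes by invoking Chernoff for Poisson variables to say the region in fact contains $\Omega(\log^2 n)$ points, whereas you use the exact void probability $\exp(-n\mu(S))$; your version is slightly more direct for the stated conclusion.
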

\begin{proof}
As in the previous proof, by Lemma~\ref{lem:simplebd2}, 
\begin{equation}\label{eq:asbefore}
 \mu(B_{Q}(R)\cap (B_{O}(R_j)\setminus B_{O}(R_{j-1}))) 
  =\Omega(\mu(B_{Q}(R)\cap B_O(R_j))), 
\end{equation}
and also as before, by  Lemma~\ref{lem:muBallInterGen} (applied with
  $\rho_A=R$, $\rho_O=R_{j}$, and $R_{j} < r_A=r_Q \leq R_{j+1}$), we have
  $$
  \mu(B_{Q}(R)\cap B_O(R_j)) =\Theta(e^{-(\alpha - \frac12)(R-R_{j})-\frac12 r_{Q}}).
$$
Again, $R-R_{j}=R(1-e^{-\alpha^j/2}) \leq R\alpha^{j}/2$, and since by assumption 
  $e^{-\alpha^{j}/2} \leq 1 - (1-\frac{1-\alpha}{2})\frac{\alpha^{j}}{2}$,  
  we have $r_{Q} \leq R_{j+1}=Re^{-\alpha^{j+1}/2}\leq R(1-(1-\frac{1-\alpha}{2})\frac{\alpha^{j+1}}{2})$. 
Thus,
\[
\mu(B_{z}(R)\cap B_O(R_j)) 
  = \Omega(e^{-(\alpha-\frac12)\frac{R}{2}\alpha^{j}-(1-(1-\frac{1-\alpha}{2})\frac{R}{4}\alpha^{j+1})})
  = \Omega(e^{-R/2}e^{\frac{R}{2}\alpha^j(\frac12-\frac34\alpha+\frac{\alpha^2}{4})}).
\]
Note that $\frac12 - \frac34 \alpha + \frac{\alpha^2}{4} > 0$
  for $\frac12 < \alpha < 1$, so the last displayed expression  
  is clearly decreasing in $j$. 
By plugging in our upper bound $j=i_0=\log_{\alpha} (2 C_0 \log R/R)$,
  we get with our choice of $C_0=2/(\frac12-\frac34\alpha+\frac{\alpha^2}{4})$, 
\[
  \mu(B_{Q}(R)\cap B_O(R_j)) = 
  \Omega\Big(\frac{R^{2}}{e^{R/2}}\Big)=\Omega((\log n)^{2}/n).
\]
Hence, the expected number of vertices in 
  $B_{Q}(R)\cap B_O(R_j)$ is $\Omega(\log^{2} n)$, and by Chernoff bounds 
  for Poisson random variables (see~\cite[Theorem~A.1.15]{AlonSpencer}), w.e.p.~there are at least $\Omega(\log^{2} n)$ vertices in this region. By~\eqref{eq:asbefore} the statement follows.
\end{proof}
Next, we show the analogue of the preceding lemma, but
  for vertices within $B_{O}(R_{j_0})$ and a somewhat different choice of
  concentric bands. Indeed,
  for vertices $v \in \calP$ with $R/2 < r_v \leq R_{j_0}$ 
  we modify the definition of $R_i$: since 
  $j_0=O(1)$, there exists some $\frac12< c< 1$ such that 
  $R_{j_0}=Re^{-\alpha^{j_0}/2}=: cR$. 
Let $T \geq 1$ be the largest integer such that 
  $c-\frac{T}{2}(1-c)(1-\alpha)>\frac12$. 
Define now the new bands to be $R'_0:=cR$, and for any $i=1,\ldots,T$, 
  define $R'_i:=R(c-\frac{i}{2} (1-c)(1-\alpha))$. 
Note in particular, that $R'_i\geq R/2$ for all $i$ in the 
  range of interest.
We have the following result.
\begin{lemma}\label{lem:closecenter}
Let $Q \in \calQ$ be a vertex added.
\begin{itemize}
\item If  $R'_j < r_Q \leq R'_{j-1}$ for some 
  $1 \leq j \leq T-1$, then w.e.p., we have
\[
\big(B_{Q}(R)\cap (B_O(R'_j) \setminus B_O(R'_{j+1})\big) \cap \calP 
  \neq \emptyset.
\]
\item If  $R'_T < r_Q \leq R'_{T-1}$, then w.e.p.~we have
\[
\big(B_{Q}(R)\cap B_O(R/2)\Big) \cap \calP \neq \emptyset.
\]
\end{itemize}
\end{lemma}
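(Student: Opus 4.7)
The plan is to mimic the argument of Lemma~\ref{lem:farcenter}: rewrite each target region as a difference $B_{Q}(R) \cap B_{O}(\rho) \setminus B_{Q}(R) \cap B_{O}(\rho')$, estimate both pieces via Lemma~\ref{lem:muBallInterGen}, argue that the inner piece is negligible, and then use the Chernoff bound for Poisson random variables to upgrade an $\omega(\log n)$ lower bound on the expected number of points into a w.e.p.\ nonemptiness statement.

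For the first case I apply Lemma~\ref{lem:muBallInterGen} with $\rho_{A} = R$, $r_{A} = r_{Q}$, and $\rho_{O} \in \{R'_{j}, R'_{j+1}\}$. The precondition $\rho_{O} + r_{Q} \geq R$ holds because $r_{Q} > R'_{j} \geq R'_{T-1} > R/2$ and $R'_{j+1} \geq R'_{T} > R/2$, both by the choice of $T$. Unlike the exponentially shrinking $R_i$-gaps used in Lemma~\ref{lem:simplebd2}, the successive differences $R'_{j} - R'_{j+1} = \frac{R}{2}(1-c)(1-\alpha)$ grow linearly in $R$, hence
\[
\frac{\mu(B_{Q}(R) \cap B_{O}(R'_{j}))}{\mu(B_{Q}(R) \cap B_{O}(R'_{j+1}))} = (1+o(1))\,e^{(\alpha - \frac{1}{2})(R'_{j} - R'_{j+1})} = n^{\Theta(1)}
\]
is polynomially large, so $\mu(B_{Q}(R) \cap (B_{O}(R'_{j}) \setminus B_{O}(R'_{j+1}))) = (1 - o(1))\,\mu(B_{Q}(R) \cap B_{O}(R'_{j}))$ directly, with no need to invoke Lemma~\ref{lem:simplebd2}. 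Substituting the worst case $r_{Q} = R'_{j-1}$ together with the explicit formula $R'_{k} = R(c - k(1-c)(1-\alpha)/2)$, the exponent $R/2 - (\alpha - \frac{1}{2})(R - R'_{j}) - r_{Q}/2$ simplifies to a strictly positive multiple of $R$, so the expected number of points of $\calP$ in the annular intersection is $n^{\Omega(1)}$, and the Chernoff bound for Poisson random variables makes the region nonempty with probability at least $1 - e^{-n^{\Omega(1)}}$, which is w.e.p.

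For the second case I apply Lemma~\ref{lem:muBallInterGen} just once, with $\rho_{A} = R$ and $\rho_{O} = R/2$; the precondition reduces to $r_{Q} \geq R/2$, which holds since $r_{Q} > R'_{T} > R/2$. The resulting $\mu(B_{Q}(R) \cap B_{O}(R/2)) = \Theta(e^{-(\alpha - \frac{1}{2})R/2 - r_{Q}/2})$, combined with the bound $r_{Q} \leq R'_{T-1} \leq R(\tfrac{1}{2} + (1-c)(1-\alpha))$ (which follows from the inequality $c - (T+1)(1-c)(1-\alpha)/2 \leq \tfrac{1}{2}$ defining $T$), again yields an expected count of $n^{\Omega(1)}$, and Poisson concentration completes the proof. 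The main technical point is the redesign of the bands: the exponential bands $R_{i}$ used for vertices near the boundary eventually fall below $R/2$ and violate Lemma~\ref{lem:muBallInterGen}'s precondition $\rho_{O} + r_{A} \geq \rho_{A}$ in this inner regime. Switching to the linearly spaced $R'_{j}$ and stopping at the largest $T$ with $R'_{T} > R/2$ simultaneously keeps every $r_{Q}$ valid for the lemma and makes the ratio of successive-annulus measures polynomially large, which is why no separate analogue of Lemma~\ref{lem:simplebd2} is needed here.
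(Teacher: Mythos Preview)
Your proposal is correct and follows essentially the same approach as the paper's own proof: both parts apply Lemma~\ref{lem:muBallInterGen} with $\rho_A=R$ and the appropriate $\rho_O$, show that the inner annulus contributes only a $o(1)$ fraction because $R'_j-R'_{j+1}=\Theta(R)$, substitute the worst case $r_Q=R'_{j-1}$ (respectively use $R'_{T-1}\le R/2+R(1-c)(1-\alpha)$ from the maximality of $T$), verify that the resulting exponent is a positive multiple of $R$ so that the expected count is $n^{\Omega(1)}$, and finish with a Chernoff bound for Poisson random variables. The only difference is presentational: the paper carries out the algebraic simplification of the exponent explicitly, while you leave it as a one-line claim, but since $T=O(1)$ the uniformity in $j$ is automatic.
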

\begin{proof}
First, assume $1 \leq j \leq T-1$. 
By  Lemma~\ref{lem:muBallInterGen} (applied with
  $\rho_A=R$, $\rho_O=R'_{j}$, and $R'_{j} < r_A =r_Q\leq R'_{j-1}$), since $R'_j+r_Q > R$ still holds, we have
\[
\mu(B_{Q}(R)\cap B_O(R'_j)) =\Theta(e^{-(\alpha - \frac12)(R-R'_{j})-\frac12 r_{Q}})
  = \Theta(e^{-\frac{R}{2}(2\alpha-1)(1-c+\frac{j}{2}(1-c)(1-\alpha)) - \frac12 r_Q}),
\]
     and also 
\[
\mu(B_{Q}(R)\cap B_O(R'_{j+1})) 
  = \Theta(e^{-\frac{R}{2}(2\alpha-1)(1-c+\frac{j+1}{2}(1-c)(1-\alpha))-\frac12 r_Q}).
\]
Thus,  
\[
\mu(B_{Q}(R)\cap B_O(R'_{j+1})) 
  = \Theta(e^{-\frac{R}{4}(2\alpha-1)(1-c)(1-\alpha)})\mu(B_{Q}(R)\cap B_O(R'_j)),
\]
so  we get
\begin{equation}\label{eq:inclusion}
\mu(B_{Q}(R)\cap (B_O(R'_j) \setminus B_O(R'_{j+1}))) 
   = (1+o(1)) \mu(B_{Q}(R)\cap B_O(R'_j)).
\end{equation}
Now, since $r_Q \leq R'_{j-1}$, 
\begin{align*}
& \mu(B_{Q}(R)\cap B_O(R'_j)) 
  = \Omega\big(e^{-\frac{R}{2}(2\alpha-1)(1-c+\frac{j}{2}(1-c)(1-\alpha))-\frac12 R'_{j-1}}\big)\\ 
&  = \Omega\Big(e^{-\frac{R}{2}\big((2\alpha-1)(1-c+\frac{j}{2}(1-c)(1-\alpha)) + c - \frac{j-1}{2}(1-c)(1-\alpha)\big)}\Big)
   = \Omega\Big(e^{-\frac{R}{2}\big(-j(1-c)(1-\alpha)^2 + (1-c)(2\alpha-1+\frac{1-\alpha}{2})+c\big)}\Big).
\end{align*}
Clearly, the bigger $j$, the bigger the last displayed term is.
Thus, we plug in the smallest possible value 
  $j=1$, and together with~\eqref{eq:inclusion} we obtain
\begin{align*}
& \mu(B_{Q}(R)\cap (B_O(R'_j) \setminus B_O(R'_{j+1}))) 
    = \Omega\Big(e^{-\frac{R}{2}\big(-(1-c)(1-\alpha)^2 + (1-c)(2\alpha-1+\frac{1-\alpha}{2})+c\big)}\Big) \\
& \qquad = \Omega\Big(e^{-\frac{R}{2}\big((1-c)(\frac{7\alpha-3}{2}-\alpha^2)+c\big)}\Big).
\end{align*}
    Note that for $\frac12 < \alpha < 1$, we have $\frac{7\alpha-3}{2}-\alpha^2 < 1$, and thus the constant factor multiplying $-\frac{R}{2}$ in the exponent of the last term of the previously displayed equation is clearly bounded by $c < 1$. Hence,   $\mu(B_{Q}(R)\cap (B_O(R'_j) \setminus B_O(R'_{j+1})))=\Omega(e^{-\gamma R/2})$ for some $\gamma < 1$. Hence, the expected number of vertices inside $B_{Q}(R)\cap (B_O(R'_j) \setminus B_O(R'_{j+1}))$ is $n^{1-\gamma}$. By Chernoff bounds
the first part of the lemma follows. For the second part, by  Lemma~\ref{lem:muBallInterGen} (applied with
  $\rho_A=R$, $\rho_O=R/2$, and $R'_T < r_A=r_Q \leq R'_{T-1}$), since $r_Q > R/2$ still holds, we have
\[
\mu(B_{Q}(R)\cap B_O(R/2)) =\Theta(e^{-(\alpha - \frac12)\frac{R}{2}-\frac12 r_{Q}})
   = \Omega(e^{-(\alpha - \frac12)\frac{R}{2}-\frac12 R'_{T-1}}).
\]
     Note that $R'_{T-1}\leq \frac{R}{2}+R(1-c)(1-\alpha)$ must hold, as otherwise $R'_{T+1}=R'_{T-1}-R(1-c)(1-\alpha) > R/2$ would hold, and then $c - \frac{T+1}{2}(1-c)(1-\alpha)> \frac12$, contradicting the definition of $T$. 
Thus,
\[
   \mu(B_{Q}(R)\cap B_O(R/2)) =\Omega(e^{-\frac{R}{2} \big(\alpha + (1-c)(1-\alpha)\big)}).
\]
Since $ \alpha + (1-c)(1-\alpha) < 1$, $\mu(B_{Q}(R)\cap B_O(R/2)) =n^{1-\gamma'}$ for some $\gamma' > 0$, and by the same argument as before, the second part of the lemma follows.
\end{proof}
Intuitively, due to the nature of hyperbolic space, one expects that vertices in a center path from a vertex $v$ that is ``sufficiently far'' from the origin, tend to stay close to the ray between $v$ and the origin. One way of capturing this foreknowledge is to show that the angle subtended at the origin by the endpoints of any such center path is ``rather small''. The following lemma formalizes this intuition.
\begin{lemma}\label{lem:nointersectioncenterpath}
Let $u$ be a vertex with $r_u > R_{i_0}$ and let $u=:u_0,u_1,\ldots,u_m$ be a center path starting at $u$, or the longest subpath found in an attempt of finding a center path starting at $u$, in case no center path is found. Then,
\[
|\theta_{u} - \theta_{u_m}| =o(\log^{C_0} n/n).
\]
\end{lemma}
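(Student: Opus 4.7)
The plan is to apply the triangle inequality on the angular coordinate and bound each edge using the angle estimate from Remark~\ref{rem:aproxAngle}. Concretely, I would write
\[
|\theta_u - \theta_{u_m}| \;\leq\; \sum_{i=0}^{m-1} |\theta_{u_i}-\theta_{u_{i+1}}|,
\]
and notice that every vertex of the (sub)path has $r > R_{i_0-1} > R/2$ for $n$ large, while consecutive vertices are within hyperbolic distance at most $R$. Thus Remark~\ref{rem:aproxAngle} applies edge-by-edge and, combined with the center-path band constraint $r_{u_j}>R_{\ell-j}$ (with the convention $R_{\ell-m}:=R_{i_0-1}$ when the actual innermost band is reached, and a strictly larger lower bound in the shorter-subpath case), yields the uniform per-edge estimate
\[
|\theta_{u_i}-\theta_{u_{i+1}}| \;\leq\; (2+o(1))\,e^{\frac12(R-r_{u_i}-r_{u_{i+1}})}
                           \;\leq\; (2+o(1))\,e^{\frac12(R-R_{\ell-i}-R_{\ell-i-1})}.
\]

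Next, I would sum these bounds. Re-indexing $k = \ell - i$, the total is at most
\[
(2+o(1))\sum_{k=i_0}^{\ell} e^{\frac12(R-R_k-R_{k-1})}.
\]
Because $R_k = R e^{-\alpha^k/2}$ is monotonically \emph{increasing} in $k$ (as $\alpha<1$ makes $\alpha^k$ decrease), the exponent is largest at the smallest index $k=i_0$; moreover, the ratio between successive summands is a very rapidly shrinking factor of order $e^{-\Theta(R\alpha^{k-1}(1-\alpha^2))}$, which at $k=i_0+1$ is already an inverse polynomial in $R$. Consequently the sum is $O(1)$ times its $k=i_0$ term, so it suffices to estimate the single quantity $e^{\frac12(R-R_{i_0}-R_{i_0-1})}$. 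Along the way I would also record that $r_u\leq R$ forces $\alpha^\ell = \Omega(1/R)$, hence $\ell = O(\log\log n)$ in the typical regime, and in any case this number of terms is harmless since the sum is dominated by its first term.

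The last step is to evaluate the dominant term. Using $Re^{-\alpha^k/2} = R - R\alpha^k/2 + O(R\alpha^{2k})$ together with the defining identity $\alpha^{i_0}=(2C_0\log R)/R$, a direct Taylor expansion converts $e^{\frac12(R-R_{i_0}-R_{i_0-1})}$ into a factor of the form $(\log n)^{c+o(1)}/n$ whose exponent $c$ is expressible in terms of $\alpha$ and $C_0$. Matching $c$ against the defining relation $C_0 = 2/(\tfrac12-\tfrac34\alpha+\tfrac{\alpha^2}{4})$ (exactly the tuning used earlier in Lemma~\ref{lem:farcenter} to obtain the $\Omega((\log n)^2/n)$ mass bound at band $i_0$) is what delivers the claimed bound $o(\log^{C_0} n / n)$.

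The main obstacle is precisely this final numerical matching: the other steps (triangle inequality, band constraints, telescoping of a super-geometric series) are routine, but the bookkeeping in Step~3 must be done carefully so that the polynomial-in-$\log n$ exponent coming out of the Taylor expansion of $R_{i_0}+R_{i_0-1}$ is controlled by the chosen $C_0$, with the slowly growing $O(\log\log n)$ factor from the path length and the geometric-series constant absorbed into the little-$o$ slack.
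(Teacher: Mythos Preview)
Your overall strategy---triangle inequality on angles, Remark~\ref{rem:aproxAngle} edge by edge, then a super-geometric sum dominated by its smallest-index term---is exactly the paper's. The crucial difference is in the per-edge bound, and that difference is precisely where your proof breaks.

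You bound the $i$-th edge by $(2+o(1))e^{\frac12(R-R_{\ell-i}-R_{\ell-i-1})}$, using the band constraint $r_{u_{i+1}}>R_{\ell-i-1}$. This is the honest bound coming from the center-path definition. After your re-indexing the dominant summand is $e^{\frac12(R-R_{i_0}-R_{i_0-1})}$. Now carry out the Taylor step you defer: since $\alpha^{i_0}=2C_0\log R/R$ gives $R-R_{i_0}=(1+o(1))C_0\log R$ and $\alpha^{i_0-1}=\alpha^{i_0}/\alpha$ gives $R-R_{i_0-1}=(1+o(1))(C_0/\alpha)\log R$, the dominant term is
\[
e^{-R/2}\,R^{\,(1+o(1))\tfrac{C_0}{2}\bigl(1+\tfrac{1}{\alpha}\bigr)}
  \;=\;\Theta\!\Big(n^{-1}(\log n)^{\tfrac{C_0}{2}(1+1/\alpha)+o(1)}\Big).
\]
But for every $\alpha\in(\tfrac12,1)$ one has $\tfrac12(1+1/\alpha)>1$, so the exponent on $\log n$ is \emph{strictly larger} than $C_0$. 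The ``numerical matching'' you flag as the main obstacle therefore does not go through: your bound is $\omega(\log^{C_0} n/n)$, not $o(\log^{C_0} n/n)$. The defining relation $C_0=2/(\tfrac12-\tfrac34\alpha+\tfrac{\alpha^2}{4})$ is irrelevant here; nothing in that identity forces $\tfrac{C_0}{2}(1+1/\alpha)<C_0$.

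The paper instead writes the per-edge bound as $2e^{\frac12(R-R_{\ell-j}-R_{\ell-j+1})}(1+o(1))$, i.e., with $R_{\ell-j+1}$ in place of your $R_{\ell-j-1}$. After their re-indexing the dominant summand becomes $e^{\frac12(R-R_{i_0}-R_{i_0+1})}$, whose $\log n$-exponent is $\tfrac{C_0}{2}(1+\alpha)<C_0$, and the lemma follows. Note, however, that from the band constraints alone one only has $r_{u_j}+r_{u_{j+1}}>R_{\ell-j}+R_{\ell-j-1}$, so the paper's sharper per-edge inequality is not obviously justified by Remark~\ref{rem:aproxAngle} either; the index shift from $R_{\ell-j-1}$ to $R_{\ell-j+1}$ is exactly what separates an exponent below $C_0$ from one above. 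Your write-up should either argue for that sharper bound or otherwise control the contribution of the final edge (from band $(R_{i_0},R_{i_0+1}]$ to band $(R_{i_0-1},R_{i_0}]$), since that single step is what blows up your estimate.
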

\begin{proof}
Let  $\ell$ be such that $R_{\ell}< r_{u} \leq R_{\ell+1}$, and note that $\ell \geq i_0$. 
Since by making paths longer, the regions exposed get only larger and differences in angles get larger as well, it suffices to consider the case where indeed a center path is found. Suppose there exists such a center path starting from $u$, that is, a sequence of vertices denoted by $u=:u_0,u_1,\ldots,u_m$, satisfying $d(u_j,u_{j+1}) \leq R$ for all $0 \leq j \leq m-1$, $R_{\ell-j} < r_{u_j} \leq R_{\ell-j+1}$ for any $0 \leq j \leq m-1$, and $R_{i_0-1} < r_{u_m} \leq R_{i_0}$. By Remark~\ref{rem:aproxAngle}, $|\theta_{u_j} - \theta_{u_{j+1}}| \leq 2e^{\frac12(R-R_{\ell-j}-R_{\ell-j+1})}(1+o(1))$, and thus, by the triangle inequality for angles, 
\begin{align*}
& |\theta_{u} - \theta_{u_m}| 
  \leq \sum_{j=0}^{m-1} |\theta_{u_j}-\theta_{u_{j+1}}|
   \leq 2(1+o(1))\sum_{j=0}^{m-1}e^{\frac12(R-R_{\ell-j}-R_{\ell-j+1})} \\
   & \quad = 2(1+o(1))\sum_{i=i_0+1}^{m+i_0}e^{\frac12(R-R_{i-1}-R_{i})}
           \leq 2(1+o(1))\sum_{i \geq i_0} 2e^{\frac12(R-R_{i}-R_{i+1})}.
\end{align*}
Since $R_i=Re^{-\alpha^i/2}=(1+o(1))R(1-\alpha^i/2)$ for $i \geq i_0$,
\begin{align*}
& |\theta_{u} - \theta_{u_m}| 
  \leq  (2+o(1))e^{-\frac12 R}\sum_{i\geq i_0}e^{(1+o(1))\frac{R}{4}\left( \alpha^{i}+\alpha^{i+1}\right)}
 = O(e^{-\frac12 R}R^{\frac12 (C_0+C_0\alpha)(1+o(1))})\\
& \quad = O(n^{-1}(\log n)^{\frac12 C_0(1+\alpha)(1+o(1))})
 = o(\log^{C_0} n/n),
\end{align*}
where the last equality is a consequence of $\alpha < 1$. The statement follows.
\end{proof}
We now have all the necessary ingredients to state and prove the main lemma.

\begin{algorithm*}[t]
\caption{Sequence in which regions are exposed}\label{alg:main}
\begin{algorithmic}[1]
%
\Procedure{Expose}{$Q,\xi$}
\State Let $\ell_0$ be the smallest integer such that $R_{\ell_0}>R-\xi$
\For{$j = 0,\ldots, C''\log n$}
\State Expose $A^{0}(R_{i_0})$
\State $\ell\gets \ell_0+1$
\Repeat\Comment{Start of $(j+1)$-th phase}
      \State $\ell \gets \ell-1$
      \If{$\ell = i_0$}
        \Return{'no path'} and stop \label{line:noPath}
      \EndIf
      \State Expose $A^{j+1}(R_{\ell})\setminus A^{j}(R_{i_0})$ \Comment{$(\ell_0-\ell+1)$-th sub-phase of the $(j+1)$-th phase}
\Until{$\big(\exists Q=:v_0,\ldots,v_{k_{j+1}}\in\calP$ a path, $v_i\in A^j(R_{i_0})\cup A^{j+1}(R_{\ell})$ for $i<k_{j+1}$, \newline \mbox{}\hspace{7em}$v_{k_{j+1}}\in A^{j+1}(R_{\ell})\setminus A^{j}(R_{i_0})$ and $r_{v_{k_{j+1}}} \leq R-\xi\big)$} \label{line:cond}
\If{$\exists$ center path starting from $v_{k_{j+1}}$}  
  \State \Return{'success'} and stop
\EndIf
\EndFor
\State \Return{'failure'} and stop
\EndProcedure
\end{algorithmic}
\end{algorithm*}

\begin{lemma}\label{lem:main}
Let $Q \in \calQ$ be a vertex added. 
With probability at least $1-o(n^{-3/2})$, $Q$ either connects to a vertex in $\calP\cap B_{O}(R_{i_0})$ through a path of length $O(\log^{C_0+1+o(1)} n)$, or any path starting at $Q$ has length at most $O(\log^{C_0+1+o(1)} n)$.
\end{lemma}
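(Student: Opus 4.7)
The plan is to analyze Algorithm~\ref{alg:main} run on $(Q,\xi)$ for a suitable constant $\xi=\xi(\alpha)$, and to show that with probability at least $1-o(n^{-3/2})$ it returns either \emph{success} (producing a path from $Q$ into $B_{O}(R_{i_0})$ of the stated length) or \emph{no path} (in which case the second alternative of the lemma will hold automatically), while ruling out the \emph{failure} outcome.

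In each phase $j+1$, the \texttt{Repeat} loop peels off bands $A^{j+1}(R_\ell)$ inward from the boundary. Either (i) it produces a vertex $v_{k_{j+1}}$ reachable from $Q$ through the already-exposed region with $r_{v_{k_{j+1}}}\le R-\xi$, in which case we attempt a center path from $v_{k_{j+1}}$; or (ii) the index $\ell$ reaches $i_{0}$, and the algorithm returns \emph{no path}. In case (i), Lemma~\ref{lem:centerpath} (applied with $\xi$ large enough that the constant hidden in that lemma is strictly less than $1$) guarantees that the center path exists with probability at least $1-p$ for some $p<1$ depending only on $\alpha$. In case (ii), no vertex of $\calP$ inside the angular sector exposed so far is both reachable from $Q$ and of radial depth $\le R-\xi$, so every path starting at $Q$ is confined to the near-boundary annulus; Lemma~\ref{lem:nolongpath} and Corollary~\ref{cor:nolongpath} then bound its length and angular spread, yielding the ``no long path'' alternative. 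By construction the sectors of $A^{j+1}\setminus A^{j}$ across distinct phases, together with the regions probed by the center-path attempts, are pairwise angularly disjoint, so the spatial independence of $\calP$ makes the phase outcomes mutually independent, and the probability that all $C''\log n$ phases fail is at most $p^{C''\log n}=o(n^{-3/2})$ for $C''=C''(p)$ sufficiently large.

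It remains to bound the length of the output. A successful path splits into (a) a portion lying in the already exposed region, ending at the last vertex $v_{k_{j+1}}$ of some phase, and (b) a center path from $v_{k_{j+1}}$ into $B_{O}(R_{i_0})$ whose length is $\ell_0-i_0=O(\log\log n)$. Part (a) is bounded by the total number of $\calP$-vertices inside the exposed region. Each ``boundary'' sub-path contributes angular width $O(\log n/n)$ by Corollary~\ref{cor:nolongpath}, while each center-path attempt contributes only $o(\log^{C_0}n/n)$ by Lemma~\ref{lem:nointersectioncenterpath}; summed over the $O(\log n)$ phases, the total angular spread is $O(\log^{C_0+1+o(1)}n/n)$, so the expected number of vertices inside is $O(\log^{C_0+1+o(1)})$. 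Since $C_0>8$, this mean is comfortably larger than $\log n$, and Poisson concentration upgrades the count to the same w.e.p.~bound; the same estimate simultaneously controls the length of any path from $Q$ in the \emph{no path} case.

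The main obstacle is the bookkeeping in the previous paragraph: one must define the annular sectors $A^{j+1}\setminus A^{j}$ and the regions probed by each center-path attempt so that they are genuinely fresh with respect to previously exposed randomness (in particular, disjoint from every $R$-ball around an already exposed vertex of $\calP\cup\calQ$), which is what permits the phase outcomes to compound multiplicatively via Lemma~\ref{lem:centerpath}, while simultaneously keeping the cumulative angular width within the $O(\log^{C_0+1+o(1)}n/n)$ budget.
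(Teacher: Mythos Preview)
Your outline follows the paper's strategy, but there is a genuine gap in the \emph{no path} branch and in the length bound, both coming from the same misconception. When the algorithm returns \emph{no path} in phase $j{+}1$, it does \emph{not} follow that every path from $Q$ is confined to the annulus $\{r>R-\xi\}$: in each earlier phase the \texttt{Until} condition was met, so $Q$ already reaches vertices $v_{k_1},\dots,v_{k_j}$ with $r_{v_{k_i}}\le R-\xi$, and Corollary~\ref{cor:nolongpath} does not apply to the sub-paths joining them. What actually confines the path is the estimate~\eqref{eq:anglebound}: any two adjacent vertices with radii $>R_{i_0}$ differ in angle by $O(\log^{C_0+o(1)}n/n)$, so a path with all vertices above $R_{i_0}$ that sits in $A^{j}(R_{i_0})$ cannot leave $A^{j+1}(R_{i_0})$ in one step. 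This is why each sector $A^{j+1}\setminus A^{j}$ must have width $\log^{C_0+\varepsilon}n/n$---not the $O(\log n/n)$ your ``boundary sub-path'' accounting via Corollary~\ref{cor:nolongpath} would suggest---and the total spread $O(\log^{C_0+1+\varepsilon}n/n)$ comes from $O(\log n)$ sectors of that fixed width, not from summing per-phase path contributions. The paper then bounds only paths with all vertices in $\{r>R_{i_0}\}$; this suffices because any path that enters $B_O(R_{i_0})$ already realizes the first alternative of the lemma.

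Two smaller points. You omit the case $r_Q\le R_{i_0}$, which the paper dispatches directly via Lemmas~\ref{lem:farcenter} and~\ref{lem:closecenter}. And the freshness obstacle you flag at the end is exactly where the work lies: the paper restricts the center-path attempt in phase $j{+}1$ to the angular strip $S_{j+1}$, uses Lemma~\ref{lem:nointersectioncenterpath} together with the assumption $|\theta_Q-\theta_{v_{k_{j+1}}}|\le (j+\tfrac32)\log^{C_0+\varepsilon}n/n$ to show that at least half of each band to be exposed survives inside $S_{j+1}$, so Lemma~\ref{lem:centerpath} still yields success probability bounded away from~$0$; without this restriction the phase outcomes are not independent.
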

\begin{proof}
Suppose first that $r_Q \leq R_{i_0}$. By applying iteratively either  Lemma~\ref{lem:farcenter} and Lemma~\ref{lem:closecenter} (if $R_{j_0} < r_Q \leq R_{i_0}$), or only Lemma~\ref{lem:closecenter} (if $r_Q \leq R_{j_0}$), w.e.p., $Q$ is connected to a vertex in $B_O(R/2)$ through a path of length $O(\log R)+O(T)=O(\log \log n)$. 

Otherwise, suppose now $r_Q > R_{i_0}$. 
If $r_Q > R-{\xi}$ for some $\xi > 0$, then by Lemma~\ref{lem:nolongpath}, with probability at least $1-o(n^{-3/2})$, either all paths starting at $Q$ have length $O(\log n)$, or after at most $O(\log n)$ steps we reach a vertex $v$ with $r_v \leq R-\xi$. Since an additional $O(\log n)$ steps will be negligible in the following, we may thus assume $r_Q \leq R-\xi$. Let $\ell_0$ be the smallest
  integer $i$ such that $R_{i}> R-\xi$.

Let $\ell$ be such that $R_{\ell} < r_Q \leq R_{\ell+1}$ and note that $\ell \geq i_0$. 
By Lemma~\ref{lem:centerpath}, the probability of having no center path starting at $v_0=v_{k_0}:=Q$ is at most $c < 1$. If we find a center path starting at $v_{k_0}$, we have reached our goal and stop with success. Otherwise, we will expose regions around the location of $Q$ in different exposing phases. 
Specifically, fix $\varepsilon > 0$ to be any arbitrary small constant.
Then,  for $0\leq j\leq C''\log n$, where 
  $C''$ is a sufficiently large constant, define 
\begin{align*}
A^{j}(\rho):= 
  \{P \mid r_P>\rho, 
      |\theta_P-\theta_Q|\leq\tfrac{j+1}{n}\log^{C_0+\varepsilon}n \}
\end{align*} 
and perform the \textsc{Expose} procedure whose pseudocode is 
  given in Algorithm~\ref{alg:main} (see illustration in Figure~\ref{fig:expose}).
\begin{figure}
\begin{center}
\ifpdf\input{expose.pdf_t}\else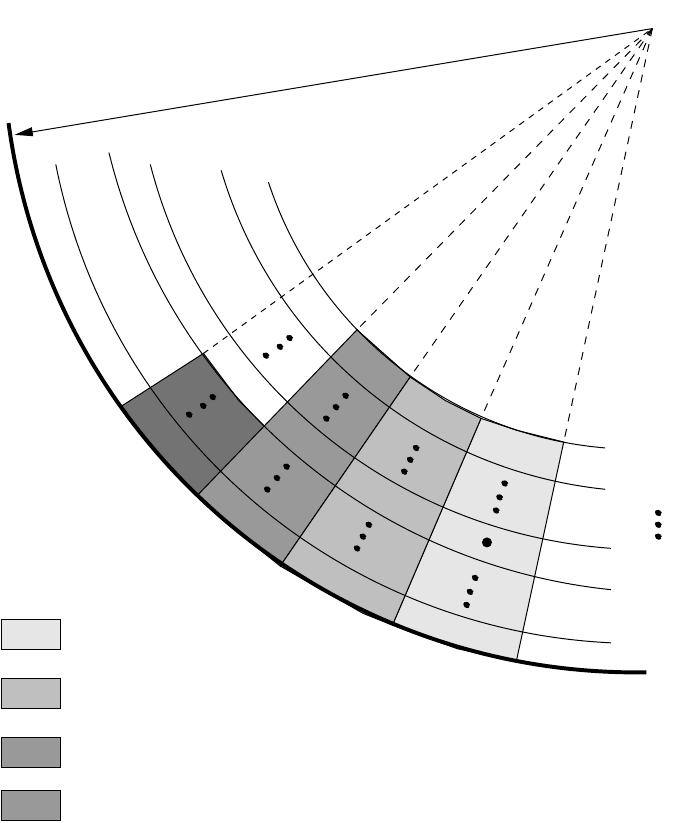\fi
\end{center}
\caption{Illustration of sequence of exposures of procedure \textsc{Expose}.}\label{fig:expose}
\end{figure}

If 'no path' is reported in Line~\ref{line:noPath} of \textsc{Expose}
  during phase $j+1$, then among the paths starting from $v_0$ with all 
  vertices $v_i$ satisfying $r_{v_i} > R_{i_0}$, there is no path with 
  its last vertex $v_{k_{j+1}} \in A^{j+1}(R_{i_0}) \setminus A^j(R_{i_0})$ satisfying $r_{v_{k_{j+1}}} \leq R-\xi$.
We will show now that the length of any such path is with
  probability at least $1-o(n^{-3/2})$ bounded by 
  $O(j\log^{C_0+\varepsilon}n)$: indeed, partition the disc of radius
  $R$ centered at the origin into $\Theta(n/\log n)$ equal sized
  sectors of angle $C' \log n/n$ for $C' > 0$ sufficiently large. By
  Chernoff bounds for Poisson variables (see for
  example~\cite[Theorem~A.1.15]{AlonSpencer}) together with a union
  bound over all $\Theta(n/\log n)$ sectors, with probability at least
  $1-o(n^{-3/2})$, the number of vertices in each sector is
  $\Theta(\log n)$.  
Thus, with this probability, the number of vertices in 
  $A^j(R_{i_0})$ is $O(j \log^{C_0+\varepsilon} n)$.
By~\eqref{eq:anglebound} 
  any two adjacent vertices $w,w'$ with $r_w,r_w' > R_{i_0}$
  satisfy $|\theta_{w}-\theta_{w'}| = O(\log^{C_0+o(1)} n/n)$.
If a path starting from $v_0$ with all vertices $v_i$ satisfying
  $r_{v_i} > R_{i_0}$ ends with some vertex 
  $v\in A^{j}(R_{i_0})$, then
  the path does not extend beyond $A^{j+1}(R_{i_0})$, and
  thus in this case, with probability at least $1-o(n^{-3/2})$, its 
  length is at most the number of vertices
  in $A^j(R_{i_0})$, i.e., $O(j \log^{C_0+\varepsilon} n)$. 
If the path ends
  with a vertex $v \in A^{j+1}(R_{i_0}) \setminus A^j(R_{i_0})$, then note that 
  $r_v > R-\xi$ must hold. 
Consider then the last vertex $w$ in this path with 
  $w \in A^j(R_{i_0})$. 
Since the next vertex $w'$ satisfies $w'  \in 
  A^{j+1}(R_{i_0}) \setminus A^j(R_{i_0})$ 
  and $r_w' > R-\xi$, by calculations as in~\eqref{eq:anglebound},
  it must hold that
  $|\theta_{w}-\theta_{w'}| = O(\log^{C_0/2+o(1)} n/n)$. Moreover, by
  Corollary~\ref{cor:nolongpath}, with probability at least
  $1-o(n^{-3/2})$, 
  any two vertices $w'',w'''$ on a path containing only
  vertices inside $A^{j+1}(R_{i_0}) \setminus A^j(R_{i_0})$ 
  satisfy $|\theta_{w''}-\theta_{w'''}| =
  O(\log n/n)$. Thus, also $|\theta_{w}-\theta_{v}| =
  O(\log^{C_0/2+o(1)} n/n)$. Since this would also have to hold for another
  possible neighbor $z$ of $v$ with $r_{z} > R_{i_0}$, and 
  $z \notin A^{j+1}(R_{i_0})$, 
  starting from $v$ the path cannot extend to a vertex
  outside $A^{j+1}(R_{i_0})$ as well. 
Hence, with probability at least
  $1-o(n^{-3/2})$, the number of vertices of such a path is bounded by
  $O((j+1) \log^{C_0+\varepsilon} n)$.

If on the other hand the condition of Line~\ref{line:cond} of 
  \textsc{Expose} is satisfied during phase $j$, then for some 
  $i_0\leq\ell'\leq\ell_0$ there is a
  path $v_0,\ldots,v_{k_{j+1}}\in\calP$ with 
  $v_i \in A^j(R_{i_0})\cup A^{j+1}(R_{\ell'})$
  for all $i< k_{j+1}$, 
  $v_{k_{j+1}}\in A^{j+1}(R_{\ell'}) \setminus A^j(R_{i_0})$.
Recall that $r_{v_{k_{j+1}}} \leq R-\xi$, 
  and observe that $R_{\ell'} < r_{v_{k_{j+1}}} \leq R_{\ell'+1}$. 
Consider then the restriction of a center path
  starting from $v_{k_{j+1}}$ to the region
\[
S_{j+1} := 
\{P \mid  \tfrac{j+1}{n}\log^{C_0+\varepsilon}n < |\theta_{P} - \theta_Q| \leq \tfrac{j+2}{n}\log^{C_0+\varepsilon} n\}.
\]
Note first that since $R_{\ell'} < r_{v_{k_{j+1}}} \leq R_{\ell'+1}$,  the first region to be exposed on the center path starting from $v_{k_{j+1}}$ contains vertices $v$ with $R_{\ell'-1} < r_v \leq R_{\ell'}$, and this region when restricted to $S_{j+1}$ has not been exposed before. In particular, the regions exposed by the center paths of $v_{k_i}$ and $v_{k_{j+1}}$, whenever restricted to $S_i$ and $S_{j+1}$, respectively, for $i < j+1$ are disjoint. Suppose without loss of generality 
that $|\theta_Q - \theta_{v_{k_{j+1}}}| \leq (j+\frac32)\log^{C_0+\varepsilon}n/n$. Observe that by  Lemma~\ref{lem:nointersectioncenterpath} the angle between any vertex $v$ with $r_v > R_{i_0}$ and the terminal vertex of the center path starting at $v$ or the longest subpath in an attempt of building a center path starting at $v$, in case a center path is not found, is bounded by $o(\log^{C_0} n/n)$. Therefore, at least half of the region to be exposed by a center path in each step is retained in $S_{j+1}$: more precisely (see Figure~\ref{fig:centerPath}), in the first step the region containing all points $P$ that were to be exposed in a center path without restrictions and that satisfy $|\theta_Q - \theta_P| \geq |\theta_Q - \theta_{v_{k_{j+1}}}|$ is included in $S_{j+1}$, and at every subsequent step on the center path starting from a vertex $u$, the region of all points $P$ that were to be exposed in a center path without restrictions and that satisfy $|\theta_Q - \theta_P| \geq |\theta_Q - \theta_{u}|$ is always included in $S_{j+1}$. Lemma~\ref{lem:centerpath} can still be applied, and the probability of having no center path starting at $v_{k_{j+1}}$ is at most $c' < 1$, and by disjointness, this is clearly independent of all previous events. 

\begin{figure}
\begin{center}
\ifpdf\input{centerPath.pdf_t}\else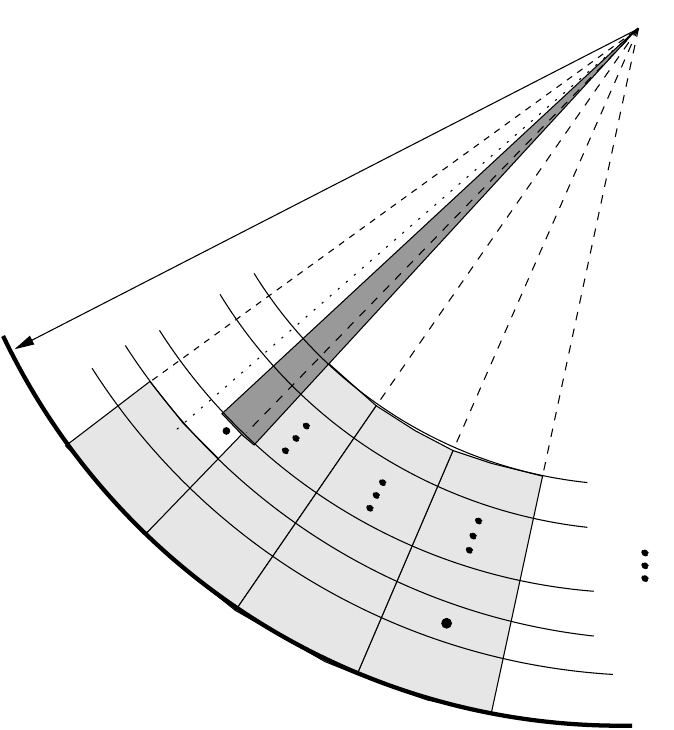\fi
\end{center}
\caption{The region that has already been exposed is lightly shaded. The $AOB$ slice of $B_{O}(R)$ corresponds to $S_{j+1}$. The darkly shaded area contains the region to be exposed in a center path without restrictions starting at $v_{k_3}$.}\label{fig:centerPath}
\end{figure}

The probability that the exposing process returns 'failure' is the probability that  during $C'' \log n$ phases  no center path is found. By independence, for $C''(c)$ sufficiently large, this probability is at most ${c'}^{C'' \log n}=o(n^{-3/2})$. Thus, with probability at least $1-o(n^{-3/2})$, the exposing process ends with 'no path' in some phase $j$, or with 'success' in some phase $j'$. If the exposing process ends with 'no path' in some phase $j$, with probability $1-o(n^{-3/2})$ any path starting at $v_0$ containing only vertices $v$ with $r_v > R_{i_0}$ has length at most $O((j+1) \log^{C_0+\varepsilon} n)=O(\log^{C_0+1+\varepsilon} n)$. If the exposing process ends with 'success' in some phase $j'$, a center path of length $O(\log R)$ starting from $v_{k_{j'}}$ is found, and as in the case of 'no path', the total length of the path starting at $v_0$ together with the center path is at most $O(\log^{C_0+1+\varepsilon} n)$. 
Thus, with probability at least $1-o(n^{-3/2})$, $v_0$ either connects 
  to a vertex in $\calP\cap B_O(R_{i_0})$ through a path of length $O(\log^{C_0+1+\varepsilon} n)$, or any path starting at $v_0$ has length at most $O(\log^{C_0+1+\varepsilon} n)$, and the statement follows, since $\varepsilon$ can be chosen arbitrarily small.
\end{proof}
The upper bound for the diameter now follows easily. 
\begin{theorem}\label{thm:main}
A.a.s., any two vertices $u$ and $v$ belonging to the same connected 
  component satisfy $d_G(u,v)=O(\log^{C_0+1+o(1)} n)$.
\end{theorem}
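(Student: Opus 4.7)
The plan is to apply Lemma~\ref{lem:main} to every vertex of the graph via a union bound and then combine the resulting dichotomy with the fact that the central ball $B_O(R/2)$ induces a clique in $G$. First, I would observe that by the triangle inequality for hyperbolic distance, any two points in $B_O(R/2)$ lie within hyperbolic distance at most $R$ of each other, so $\calP\cap B_O(R/2)$ always induces a clique in $G$. By Lemma~\ref{lem:muBall}, $\mu(B_O(R/2))=\Theta(n^{-\alpha})$, hence by standard Chernoff bounds for Poisson variables this clique has size $\Theta(n^{1-\alpha})$ w.e.p., and in particular is nonempty.

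Next, for each vertex $w$ of $G$ I would treat $w$ as the added vertex $Q$ and invoke Lemma~\ref{lem:main}. A minor point is that the lemma phrases its first alternative as a path reaching $\calP\cap B_O(R_{i_0})$ rather than the clique $B_O(R/2)$, but the proof of the lemma in the case $r_Q\le R_{i_0}$ already shows that from any vertex of $B_O(R_{i_0})$ one can reach $B_O(R/2)$ via a path of length $O(\log\log n)$ (using iterated applications of Lemma~\ref{lem:farcenter} and Lemma~\ref{lem:closecenter}). Appending this tail to the path produced by Lemma~\ref{lem:main} does not affect the leading $\log^{C_0+1+o(1)}n$ bound. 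Hence, with probability at least $1-o(n^{-3/2})$, each vertex $w$ satisfies one of the following two alternatives: (i) $w$ is joined to a vertex of $\calP\cap B_O(R/2)$ by a path of length $O(\log^{C_0+1+o(1)}n)$, or (ii) every path of $G$ starting at $w$ has length $O(\log^{C_0+1+o(1)}n)$. A union bound over all $n$ choices of $w$ gives a total failure probability $o(n^{-1/2})$ in the Poissonized model, which by the de-Poissonization discussion of Section~\ref{sec:prelim} translates to an a.a.s.~statement in the original model.

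Finally, I would combine these alternatives for any two vertices $u,v$ lying in the same connected component. If both $u$ and $v$ satisfy (i), then concatenating $u$'s short path to the central clique, a single edge inside the clique, and the reverse of $v$'s short path yields a $u$--$v$ walk of length $O(\log^{C_0+1+o(1)}n)$. If instead $u$ (say) satisfies (ii), then any $u$--$v$ path starts at $u$ and therefore has length at most $O(\log^{C_0+1+o(1)}n)$; in particular the shortest such path does. In all cases $d_G(u,v)=O(\log^{C_0+1+o(1)}n)$, as claimed.

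The main (and only) obstacle is bookkeeping: one must check that the $o(n^{-3/2})$ per-vertex failure probability delivered by Lemma~\ref{lem:main} is tight enough to survive both the union bound over all $n$ vertices and the $\sqrt n$ loss inherent to de-Poissonization (it does, since $n\cdot o(n^{-3/2})\cdot\sqrt n=o(1)$), and that the $O(\log\log n)$ extension from $B_O(R_{i_0})$ into $B_O(R/2)$ is absorbed harmlessly into the $\log^{C_0+1+o(1)}n$ main term. No new geometric or probabilistic ideas beyond those already contained in Lemma~\ref{lem:main} are required.
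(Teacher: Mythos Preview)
Your proposal is correct and follows essentially the same approach as the paper's own proof: apply Lemma~\ref{lem:main} per vertex, use the central clique in $B_O(R/2)$, and combine the two alternatives via a union bound and de-Poissonization. The only cosmetic difference is the order of operations---the paper de-Poissonizes first for a single added vertex $Q$ (obtaining failure probability $o(n^{-1})$ in the uniform model) and \emph{then} takes the union bound over all $n$ vertices, whereas you union-bound first and de-Poissonize second; either way the arithmetic $n\cdot o(n^{-3/2})\cdot\sqrt{n}=o(1)$ works out, and your explicit remark about extending from $B_O(R_{i_0})$ to $B_O(R/2)$ via Lemmas~\ref{lem:farcenter} and~\ref{lem:closecenter} just makes overt a step the paper leaves implicit.
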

\begin{proof}	
Let $Q \in \calQ$ be a vertex added in the Poissonized model. 
By Lemma~\ref{lem:main}, with probability at least $1-o(n^{-3/2})$, 
  either $Q$ connects to a vertex in $B_O(R/2)$ 
  using $O(\log^{C_0+1+o(1)} n)$ steps, or 
  all paths starting from $Q$ have 
  length  $O(\log^{C_0+1+o(1)} n)$. 
Note that all vertices inside $B_O(R/2)$ form a clique, and by 
  Lemma~\ref{lem:muBall} together with Chernoff bounds there 
  are w.e.p.~$\Theta(n^{1-\alpha})$ vertices inside $B_O(R/2)$. 
By de-Poissonizing the model, using~\eqref{lem:Poisson}, the same 
  statement holds with probability at least $1-o(n^{-1})$ for our particular 
  choice of $Q$ in the uniform model. 
Using a union bound over all $n$ vertices, with probability $1-o(1)$ the 
  same holds for all vertices simultaneously.  
Thus, the maximum graph distance between any two vertices of the same 
  connected component is $O(\log^{C_0+1+o(1)}n)$,
  and the statement follows.
\end{proof}

By the result of Bode, Fountoulakis and M\"{u}ller~\cite{BFM13}, a.a.s., all 
  vertices belonging to $B_O(R/2)$ are part of a giant component. 
We call this the \emph{center giant component}, and we will show that 
  there is no other giant component. 
\begin{corollary}\label{cor:main3}
A.a.s., the size of the second largest component of $G_{\alpha,C}(n)$ is 
  $O(\log^{2C_0+1+o(1)}n)$.
\end{corollary}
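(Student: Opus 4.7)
The plan is to show that every component of $G_{\alpha,C}(n)$ other than the center giant component has size $O(\log^{2C_0+1+o(1)} n)$, from which the corollary follows immediately. The starting observation is that any such non-center component must live entirely in the outer band $\{P : r_P > R_{i_0}\}$: by iterating Lemmas~\ref{lem:farcenter} and~\ref{lem:closecenter} in the manner of the opening paragraph of the proof of Lemma~\ref{lem:main}, w.e.p.~every vertex with $r_v \leq R_{i_0}$ is joined to a vertex of $B_O(R/2)$ through a path of length $O(\log\log n)$. A union bound over the $n$ vertices then guarantees that, a.a.s., every vertex of $B_O(R_{i_0})$ belongs to the center giant component, so the second largest component is confined to the outer band.

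Next, I would combine Theorem~\ref{thm:main} with the per-edge angular estimate~\eqref{eq:anglebound} to bound the angular width of a non-center component $H$. By Theorem~\ref{thm:main}, any two vertices $u, v \in H$ are joined by a graph path of length at most $D = O(\log^{C_0+1+o(1)} n)$. Since every vertex along this path has $r > R_{i_0}$, the estimate~\eqref{eq:anglebound} says that consecutive vertices on the path subtend at the origin an angle of at most $O(\log^{C_0+o(1)} n / n)$. Summing along the path via the triangle inequality for angles,
\[
|\theta_u - \theta_v| \;\leq\; D \cdot O(\log^{C_0+o(1)} n / n) \;=\; O(\log^{2C_0+1+o(1)} n / n).
\]
Hence the angular diameter of $H$ is at most $W := O(\log^{2C_0+1+o(1)} n / n)$.

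It now suffices to bound, uniformly over all angular windows of width $W$, the number of Poisson points inside that window and the outer band. Partitioning $[0, 2\pi)$ into $N = O(n / \log^{2C_0+1+o(1)} n)$ consecutive sectors of width $W$, every non-center component is contained in the union of at most two consecutive sectors. By Lemma~\ref{lem:muBall}, the measure of $B_O(R) \setminus B_O(R_{i_0})$ is $1 - o(1)$, so the expected number of Poisson points in one such sector restricted to the outer band is $\mu_0 = \Theta(nW) = \Theta(\log^{2C_0+1+o(1)} n)$. Since $C_0 > 8$, $\mu_0 = \omega(\log n)$, so Chernoff bounds for Poisson variables (as in~\cite[Theorem~A.1.15]{AlonSpencer}) give a deviation probability of $e^{-\Omega(\mu_0)}$ per sector, comfortably beating a union bound over all $N$ sectors. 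De-Poissonizing through~\eqref{lem:Poisson} as in the proof of Theorem~\ref{thm:main} loses only a factor of $\sqrt{n}$, still well within the $o(1)$ budget, completing the argument.

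The main obstacle I anticipate is conceptual rather than technical: converting the diameter bound from Theorem~\ref{thm:main} into a \emph{size} bound relies on the fact that graph-theoretic length tightly controls angular span in the outer band, while a narrow angular footprint forces a small number of vertices only because the entire non-center component lives near the boundary, where the mass of the region grows linearly in angular width. All three a.a.s.~ingredients (center-reachability for $r \leq R_{i_0}$, polylog diameter, concentration in narrow sectors) come with failure probability well below $1/n$, so the union bound and de-Poissonization fit together, but this combination really hinges on having $C_0 > 1$ so that $\log^{2C_0+1+o(1)} n$ is comfortably larger than $\log n$ for the Chernoff step.
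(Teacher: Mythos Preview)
Your proposal is correct and follows essentially the same approach as the paper: confine any non-center component to the band $\{r>R_{i_0}\}$, combine the diameter bound of Theorem~\ref{thm:main} with the per-edge angular estimate~\eqref{eq:anglebound} to bound the angular span of such a component, and then use Chernoff plus a union bound over sectors to convert angular span into a vertex count. The only cosmetic difference is that the paper partitions into sectors of width $\Theta(\log n/n)$ (so a component occupies $O(\log^{2C_0+o(1)} n)$ sectors, each with $O(\log n)$ vertices), whereas you take sectors of width $W=\Theta(\log^{2C_0+1+o(1)} n/n)$ so that a component fits in at most two of them; either slicing yields the same bound.
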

\begin{proof}
Having de-Poissonized some results in the previous proof, we may now work directly in the model $G_{\alpha,C}(n)$ during this proof.
As in the proof of Lemma~\ref{lem:main}, partition the disc of radius $R$ centered at the origin into $\Theta(n/\log n)$ equal sized sectors of angle $C' \log n/n$ for some large constant $C' > 0$ and order them counterclockwise. The distance between sectors is measured according to this ordering: two sectors are at distance $D$ from each other, if the shorter of the two routes around the clock between the two sectors has exactly $D-1$ sectors in between.

Now, by the proof of
  Theorem~\ref{thm:main}, a.a.s., every vertex $v$ with 
  $r_v \leq R_{i_0}$ connects to $B_O(R/2)$, so a.a.s., every such vertex 
  is part of the center giant component. 
We may assume that  all other vertices $v$ in other components satisfy 
  $r_v > R_{i_0}$, and consider only these components from now on. 
By Theorem~\ref{thm:main}, a.a.s.,~for any two vertices of 
  the same component, their graph distance is bounded by 
  $O(\log^{C_0+1+o(1)} n)$.  
Recall that for any two vertices $v_i,v_j$ with $d(v_i,v_j) \leq R$ 
  and $r_{v_i},r_{v_j} > R_{i_0}$, by~\eqref{eq:anglebound}, 
  $|\theta_{v_i} - \theta_{v_j}| = O((\log n)^{C_0+o(1)}/n),$ and 
  therefore, by the triangle inequality for angles, for 
  any two vertices $u$ and $v$ belonging to the same component, a.a.s.\ we have
  $|\theta_{u} - \theta_{v}| = O((\log n)^{2C_0+1+o(1)}/n).$ 
Since for any two vertices $u$ and $v$ belonging to sectors at distance 
  $2 \leq D=o(n/\log n)$ from each other we have 
  $|\theta_{u}-\theta_{v}|=\Theta(D \log n/n)$, it must hold that 
  between any two vertices $u$ and $v$ of the same component, there 
  are at most $O(\log^{2C_0+o(1)} n)$ sectors. 
By Chernoff bounds for binomial random variables together with a
  union bound over all sectors, for $C' > 0$ large enough, a.a.s., in
  any sector there are at most $O(\log n)$ vertices.  Thus,
  a.a.s.,~the number of vertices of the second largest component is
  bounded by $O(\log^{2C_0+1+o(1)} n)$.
\end{proof}

\section{The lower bound on the diameter}\label{sec:lower}
In this section we show that the diameter of a random
  hyperbolic graph is $\Omega(\log n)$. In fact, we do more, since we
  actually establish the
  existence of a component forming a path of length $\Theta(\log n)$.

To achieve this section's main objective we 
  rely on additional bounds concerning the measure
  of regions defined in terms of set algebraic manipulation of distinct balls.
In the following section we derive these bounds. The reader, however, might prefer to skip their proofs 
  upon first reading, and come back to them once their intended use
  is understood.

\subsection{Useful bounds}%
The vertices of 
  the path component whose existence we will establish will be shown to 
  be at distance $R-\Omega(1)$ of the origin $O$. 
This explains why below we focus on approximating the measure of regions
  which are close to the boundary.

Our next result establishes two facts. 
First, it gives an approximation for the measure of a band centered
  at the origin. 
Then, it shows that (in expectation) 
  there are $\Omega(1)$ vertices in the intersection of two constant
  width bands of radius $R-\Theta(1)$, 
  one of which is centered at the origin and the other one 
  centered at a point that is inside the previous band. 

\begin{figure}[h]
\begin{center}
\ifpdf\input{intersectBands.pdf_t}\else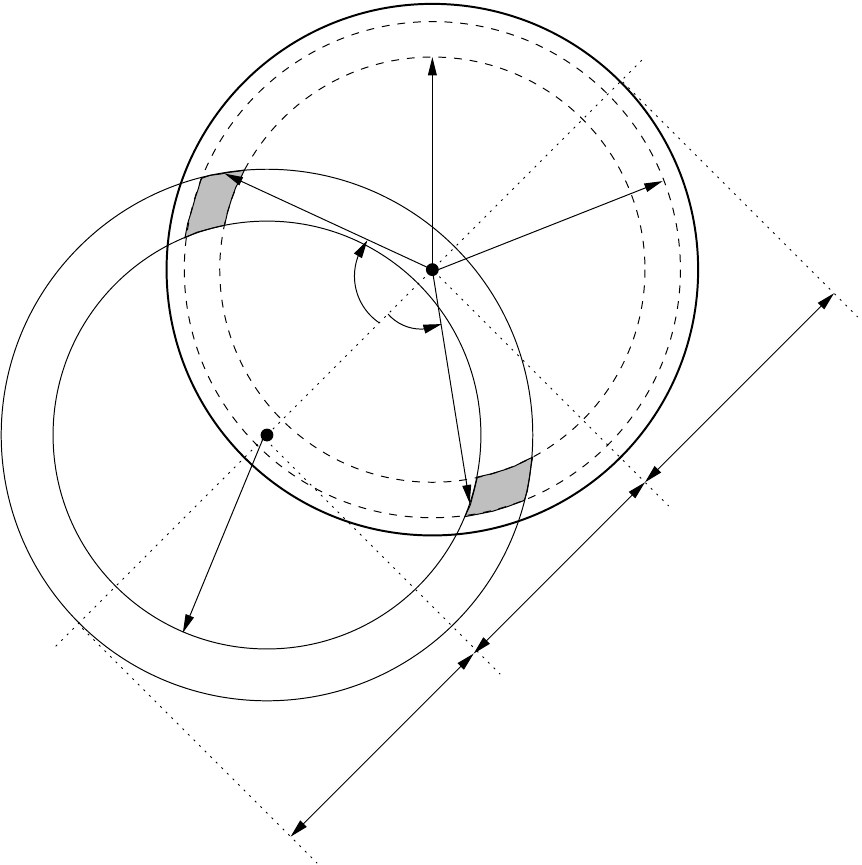\fi
\end{center}
\caption{the shaded area corresponds to 
  $\big(B_A(R)\setminus B_A(R{-}c_3)\big)\cap 
    \big(B_{O}(R{-}c_1)\setminus B_{O}(R{-}c_2)\big)$
  (also, $\theta=\theta_{R}(r,r_A)$ and $\theta'=\theta_{R-c_3}(r',r_A)$)}\label{fig:interBands}
\end{figure}

\begin{lemma}\label{lem:medidalb} 
The following statements hold: 
\begin{enumerate}[leftmargin=*]
\item\label{it:medidalb1}
If $0 \leq \rho'_O \leq  \rho_O < R$, then
\[
\mu(B_{O}(\rho_{O})\setminus B_{O}(\rho'_{O})) 
  = e^{-\alpha(R-\rho_{O})}(1-e^{-\alpha(\rho_{O}-\rho'_{O})}+o(1)).
\]

\item\label{it:medidalb2}
If $0 < c_1  < c_2$ are two positive constants, 
  $R-c_2 \leq r_A \leq R-c_1$, and $c_3=\Omega(1)$, then
\[
\mu((B_A(R)\setminus B_A(R{-}c_3))\cap (B_{O}(R{-}c_1)\setminus B_{O}(R{-}c_2)))  = \Omega(e^{-R/2}).
\]
\end{enumerate}
\end{lemma}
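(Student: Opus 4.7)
Part 1 follows directly from Lemma~\ref{lem:muBall}. Since $\rho'_O \leq \rho_O$ implies $B_O(\rho'_O) \subseteq B_O(\rho_O)$, the plan is to write $\mu(B_O(\rho_O) \setminus B_O(\rho'_O)) = \mu(B_O(\rho_O)) - \mu(B_O(\rho'_O))$, apply Lemma~\ref{lem:muBall} to each term, factor out $e^{-\alpha(R-\rho_O)}$, and absorb the two $(1+o(1))$ factors into a single $o(1)$ error, which produces exactly the claimed expression.

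For Part 2, my approach is to integrate in polar coordinates around $O$. For each fixed radius $r \in [R-c_2, R-c_1]$, a point $P = (r, \theta_P)$ belongs to the region of interest iff $|\theta_P - \theta_A|$ lies in the interval $(\theta_{R-c_3}(r, r_A), \theta_R(r, r_A)]$, using that for fixed $r, r_A$ the angle $\theta_d(r, r_A)$ is monotone increasing in $d$, as noted in Remark~\ref{rem:aproxAngle}. By symmetry around $\theta_A$, this gives angular extent $2\bigl(\theta_R(r, r_A) - \theta_{R-c_3}(r, r_A)\bigr)$ at each radius. Invoking Lemma~\ref{lem:aproxAngle}, both angles take the form $2 e^{(d-r-r_A)/2}(1+o(1))$, so their difference equals $2(1 - e^{-c_3/2}) e^{(R-r-r_A)/2}(1+o(1))$; the assumption $c_3 = \Omega(1)$ guarantees $1 - e^{-c_3/2} = \Omega(1)$.

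Combined with the estimate $f(r) = \Theta(e^{-\alpha(R-r)})$ valid for $r = R - \Theta(1)$ (a direct consequence of the definition of $f$), the radial integral becomes, after the substitution $t = R - r$,
\[
\Omega\bigl(e^{-r_A/2}\bigr) \cdot \int_{c_1}^{c_2} e^{-(\alpha - 1/2)\,t} \, dt \;=\; \Omega\bigl(e^{-r_A/2}\bigr) \;=\; \Omega(e^{-R/2}),
\]
where the last step uses $r_A \leq R - c_1 = R - O(1)$. The main technical obstacle is verifying the hypothesis of Lemma~\ref{lem:aproxAngle} for the distance $R - c_3$: one needs $\min\{r, r_A\} \leq R - c_3$, which can fail if $c_3$ exceeds $c_1$. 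I would resolve this by a monotonicity argument: since the set $B_A(R) \setminus B_A(R-c_3)$ is nondecreasing in $c_3$, one may replace $c_3$ by $\min(c_3, c_1)$ without weakening the claimed lower bound, and the reduced constant still satisfies both $c_3 = \Omega(1)$ and the hypothesis of Lemma~\ref{lem:aproxAngle}.
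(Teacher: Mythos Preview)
Your proposal is correct and follows essentially the same approach as the paper: Part~1 by subtraction using Lemma~\ref{lem:muBall}, and Part~2 by integrating in polar coordinates, expressing the angular extent as $\theta_R(r,r_A)-\theta_{R-c_3}(r,r_A)$, and applying Lemma~\ref{lem:aproxAngle} to each term. Your write-up is in fact slightly more careful than the paper's: you explicitly address the hypothesis $\min\{r,r_A\}\leq R-c_3$ of Lemma~\ref{lem:aproxAngle}, which the paper silently assumes (it is only guaranteed in the later application where $c_3<c_1$ is imposed), and your monotonicity reduction to $\min(c_3,c_1)$ is a clean way to handle the general statement.
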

\begin{proof}
The first part of the lemma follows directly from Lemma~\ref{lem:muBall}.
For the second part (see Figure~\ref{fig:interBands}), again relying on Lemma~\ref{lem:aproxAngle}  we get that the second
  expression equals
\begin{align*}
& 2\int_{R-c_2}^{R-c_1}\int_{\theta_{R-c_3}(r,r_A)}^{\theta_{R}(r,r_A)} \frac{f(r)}{2\pi}d\theta dr 
  = 2\frac{e^{(R - r_A)/2}(1-e^{-c_3/2})}{\pi C(\alpha,R)} 
 \times\int_{R-c_2}^{R-c_1} e^{-r/2}\alpha\sinh(\alpha r)\Big(1+O(e^{-r})\Big)dr \\
& \ = \frac{\Omega(1)}{C(\alpha,R)}\Big(e^{(\alpha-\frac12)R}
    + O(e^{(\alpha-3/2)R})\Big).
\end{align*}
The desired conclusion follows recalling that $C(\alpha,R)=\cosh(\alpha R)-1=
    \frac{1}{2}e^{\alpha R}+O(e^{-\alpha R})$.
\end{proof}
The following result implicitly establishes that 
  provided $c_1$, $c_2$, and $c_3$ are appropriately chosen constants,
  if $A$ is a vertex such that $R-c_2<r_A<R-c_1$, then
  in expectation $A$ has $\Omega(1)$ neighbors at distance
  at least $R-c_3$ in the 
  band $B_{O}(R-c_1)\setminus B_{O}(R-c_2)$ both in the clockwise
  and anticlockwise direction.
\begin{lemma}\label{lem:BallIntersection}
Let $0 < c_3 < c_1 < c_2$ be small positive constants so 
  that $2e^{c_1-c_2} > e^{c_3/2}$ holds.
Suppose $R-c_2 \leq r_{A},r_{B} \leq R-c_1$ and  $R-c_3 \leq d(A,B) \leq R$.  
Then 
\[
\mu\big([(B_B(R)\setminus B_B(R{-}c_3)) \\
 \cap 
  (B_{O}(R{-}c_{1})\setminus B_{O}(R{-}c_{2}))]\setminus B_A(R)\big) \\
 =\Omega(e^{-R/2})=\Omega(1/n).
\]
\end{lemma}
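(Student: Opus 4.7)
I would prove the lower bound by exhibiting a suitable subregion of $[(B_B(R)\setminus B_B(R-c_3)) \cap (B_O(R-c_1)\setminus B_O(R-c_2))]\setminus B_A(R)$ whose $\mu$-mass is $\Omega(e^{-R/2})$. Fix coordinates so that $\theta_A = 0$ and (WLOG) $\theta_B > 0$, and write $a = R - r_A$, $b = R - r_B$, both in $[c_1, c_2]$. For each $r \in [R-c_2, R-c_1]$, consider the \emph{far-side crescent} of $B$ at radius $r$, namely the angular window
\[
I(r) := [\theta_B + \theta_{R-c_3}(r, r_B),\ \theta_B + \theta_R(r, r_B)].
\]
By construction, any point with polar coordinates $(r, \theta)$ and $\theta \in I(r)$ already lies in $(B_B(R)\setminus B_B(R-c_3)) \cap (B_O(R-c_1)\setminus B_O(R-c_2))$. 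The task then reduces to (i) verifying that $I(r) \cap [-\theta_R(r, r_A), \theta_R(r, r_A)] = \emptyset$ for every $r$ in the band (so that the crescent is disjoint from $B_A(R)$), and (ii) showing $\int_{R-c_2}^{R-c_1} |I(r)|\, f(r)/(2\pi)\, dr = \Omega(e^{-R/2})$.

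\textbf{Measure estimate (ii).} This is exactly the integration performed in the proof of part~\ref{it:medidalb2} of Lemma~\ref{lem:medidalb}, with $B$ in the role of $A$. By Lemma~\ref{lem:aproxAngle},
\[
|I(r)| = \theta_R(r, r_B) - \theta_{R-c_3}(r, r_B) = 2 e^{(R-r-r_B)/2}(1 - e^{-c_3/2})(1 + o(1)),
\]
which is $\Omega(e^{-R/2})$ uniformly for $r \in [R-c_2, R-c_1]$, and the same calculation as in Lemma~\ref{lem:medidalb} then yields $\Omega(e^{-R/2}) = \Omega(1/n)$, using $R = 2\log n + C$.

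\textbf{Disjointness (i).} This is the heart of the proof. We need $\theta_B + \theta_{R-c_3}(r, r_B) > \theta_R(r, r_A)$. Writing $p = R - r \in [c_1, c_2]$ and applying Lemma~\ref{lem:aproxAngle} to each angle (together with the monotonicity in $d$ from Remark~\ref{rem:aproxAngle}, which with $d(A, B) \geq R - c_3$ yields $\theta_B \geq 2 e^{-R/2} e^{(a+b-c_3)/2}(1-o(1))$), the inequality reduces, after dividing through by $2 e^{-R/2} e^{(p+a)/2}$, to
\[
e^{(b-c_3-p)/2} + e^{(b-c_3-a)/2} > 1 + o(1).
\]
Since $a, p \leq c_2$ and $b \geq c_1$, the left-hand side is bounded below by $2 e^{(c_1 - c_2 - c_3)/2} = 2 e^{(c_1-c_2)/2} e^{-c_3/2}$. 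Because $c_1 < c_2$ implies $e^{(c_1-c_2)/2} > e^{c_1 - c_2}$, the hypothesis $2 e^{c_1-c_2} > e^{c_3/2}$ gives
\[
2 e^{(c_1-c_2)/2} e^{-c_3/2} > 2 e^{c_1-c_2} e^{-c_3/2} > 1,
\]
with a constant margin that absorbs the $o(1)$.

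\textbf{Main obstacle.} The sharp point is step (i) in the extremal configuration $a = p = c_2$, $b = c_1$, $d(A, B) = R - c_3$ (i.e., $A$ furthest from the boundary, $B$ closest to it and closest to $A$), which is where the reduced inequality is tightest; the stated hypothesis $2 e^{c_1-c_2} > e^{c_3/2}$ is tailored exactly so this worst case still works. The multiplicative $(1 + \Theta(e^{c-a-b}))$ errors from Lemma~\ref{lem:aproxAngle} are $O(e^{-R})$ under the radius constraints $r_A, r_B, r \geq R - c_2$ and so do not threaten the strict inequality.
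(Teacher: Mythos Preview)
Your proof is correct and follows essentially the same route as the paper: identify the far-side crescent of $B$ in the band, use Lemma~\ref{lem:medidalb}, Part~\ref{it:medidalb2}, for the measure, and use the angle estimates from Lemma~\ref{lem:aproxAngle}/Remark~\ref{rem:aproxAngle} together with the hypothesis $2e^{c_1-c_2}>e^{c_3/2}$ to show that crescent lies outside $B_A(R)$. The only cosmetic difference is that the paper argues by contradiction on the two whole halves $\mathcal{D},\mathcal{D}'$ of the crescent (showing any point of $\mathcal{D}$, in particular $A$, is at distance $>R$ from every point of $\mathcal{D}'$), whereas you check the angular inequality $\theta_B+\theta_{R-c_3}(r,r_B)>\theta_R(r,r_A)$ radius by radius; the underlying computation is the same.
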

\begin{proof}
By Lemma~\ref{lem:medidalb}, Part~\ref{it:medidalb2},
  we know that $B_{B}(R)\setminus B_{B}(R-c_{3})$
  intersects the band at distance between $R-c_{2}$ and $R-c_{1}$, 
  i.e.~$B_{O}(R-c_{1})\setminus B_{O}(R-c_{2})$, in a region of 
  measure $\Omega(e^{-R/2})$. 
Note that the intersection comprises two disconnected regions of equal 
  measure, say $\cal{D}$ and $\cal{D}'$. We may assume that $A \in \cal{D}$ and also that for all points $P\in \cal{D}$ and 
  $P' \in \cal{D}'$ we have 
  $\theta_P < \theta_B < \theta_{P'}$. 
  We will show that $B_A(R)$ does not intersect $\cal{D}'$: 
  suppose for contradiction that
  $P \in \cal{D}$ and
  $P' \in \cal{D}'$ are within distance $R$: 
  then, by  Remark~\ref{rem:aproxAngle}, 
  we would have 
\begin{equation}\label{eq:supposedbound}
|\theta_P - \theta_{P'}|
  \leq (2+o(1))e^{\frac12(R-2(R-c_2))}
  = (2+o(1))e^{-R/2}e^{c_2}.
\end{equation} 
On the other hand, for any 
  $P\in\cal{D}$, since $d(P,B)\geq R-c_3 > R-c_1 > R-c_2$, 
  we have again by  Remark~\ref{rem:aproxAngle},
  $|\theta_B - \theta_P|\geq (2+o(1))e^{\frac12(R-c_3-2(R-c_1))}=(2+o(1))e^{-R/2}e^{c_1-\frac{c_3}{2}}$, and the same bound holds for
    $|\theta_B - \theta_{P'}|$. 
Since $P$ and $P'$ satisfy $\theta_{P} < \theta_B < \theta_{P'}$, 
  we have $|\theta_{P}-\theta_{P'}|=|\theta_B-\theta_P|+|\theta_B-\theta_{P'}|$, 
  and thus $|\theta_P-\theta_{P'}|\geq (4+o(1))e^{-R/2}e^{c_1-\frac{c_3}{2}}$. 
Since by assumption $2e^{c_1-\frac{c_3}{2}} > e^{c_2}$, this 
  contradicts~\eqref{eq:supposedbound}. 
The lemma follows.
\end{proof}

\subsection{The existence of a path component of length $\Theta(\log n)$}%
The main objective of this section is to show 
  that inside a band at constant radial distance from the
  boundary we find $\Theta(\log n)$ vertices forming a path. 
Although the calculations involved require careful bookkeeping,
  at a high level the proof strategy is not complicated.
We now informally describe it.
We fix a band centered at the origin, say 
  $B_{O}(R-c_1)\setminus B_{O}(R-c_2)$, where $c_1$ and $c_2$ are constants.
Roughly, we show there are 
  $m=o(\sqrt{n})$ vertices $Q_1,\ldots, Q_m$ in the aforementioned band
  that satisfy the following two properties: 
  (1) the $Q_i$'s are spread out throughout the band,
  i.e., each pair subtends an angle at the origin that is ``sufficiently''
  large, and (2) with ``not too small" probability 
  each $Q_i$ is the endvertex of a path 
  of length $\Theta(\log n)$ which is completely 
  contained in the mentioned band. 
The former property implies that the events considered in the latter 
  are independent, from where it easily follows that 
  w.e.p.~there must be a path of length $\Theta(\log n)$ inside the band.
However, it will become clear next that there are subtle issues that 
  must me dealt with carefully in order to formalize this paragraph's
  discussion.

\begin{theorem}\label{thm:main2}
A.a.s., there exists a component forming a path of length $\Theta(\log n)$.
\end{theorem}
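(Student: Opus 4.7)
The plan is to implement the strategy sketched in the paragraph just before the theorem: seed many well-separated candidate starting points in a constant-width band, try to grow an isolated induced path from each, and exploit approximate independence to boost the success probability. I work in the Poissonized model throughout and de-Poissonize at the end via~\eqref{lem:Poisson}.

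Fix constants $0 < c_1 < c_3 < c_2$ satisfying the hypothesis $2e^{c_1-c_2} > e^{c_3/2}$ of Lemma~\ref{lem:BallIntersection} (and, later, also small enough that three consecutive ``step angles'' already exceed $\theta_R(R-c_2, R-c_2)$), and let $\mathcal{B} := B_O(R-c_1) \setminus B_O(R-c_2)$, of constant measure by Lemma~\ref{lem:medidalb}(\ref{it:medidalb1}). I add $m = \lfloor n^\gamma \rfloor$ auxiliary vertices $Q_1, \ldots, Q_m \in \calQ$ for a small $\gamma \in (0, 1/2)$ to be tuned, conditioning each on landing in $\mathcal{B}$ and on angular coordinates well-separated pairwise by $\Omega(n^{-\gamma})$. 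From each seed I greedily attempt to grow an induced path $Q_i =: v_0, v_1, \ldots, v_L$ of length $L = C\log n$ inside $\mathcal{B}$, moving monotonically in angle: by Lemma~\ref{lem:BallIntersection} the set of admissible next-vertex positions has measure $\Omega(1/n)$ at each step, so a Poisson calculation gives constant-probability success per step and therefore success probability at least $n^{-\eta_1}$ with $\eta_1 = O(C)$ overall. Lemma~\ref{lem:BallIntersection} explicitly rules out ``second-neighbor'' chords $v_{j-1}$--$v_{j+1}$, and Remark~\ref{rem:aproxAngle} together with the monotone angular progression rules out longer chords, so the path is induced.

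To upgrade from an induced path to a connected component I impose the isolation event: no Poisson point in $F_i := \bigcup_{j=0}^{L} B_{v_j}(R) \setminus \{v_0, \ldots, v_L\}$. By Lemma~\ref{lem:muBallInterGen} with $\rho_A = \rho_O = R$ each ball has measure $\Theta(1/n)$, so $\mu(F_i) = O(L/n)$ and the isolation probability is $e^{-O(L)} = n^{-\eta_2}$ with $\eta_2 = O(C)$. Thus the probability that $Q_i$ seeds an isolated length-$L$ path component is at least $n^{-\eta}$ with $\eta = \eta_1 + \eta_2 = O(C)$, which can be made as small as we like by shrinking $C$.

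The main obstacle is establishing independence of these $m$ seed events. The path-building for seed $i$ only depends on an angular sector of width $O(L/n) = O(\log n/n) \ll n^{-\gamma}$ around $\theta_{Q_i}$, so those parts are automatically disjoint across seeds. The trouble lies with the isolation region: every $v_j$ is adjacent to every point of $B_O(c_2)$, so $F_i$ contains a common central region for all seeds. I propose to handle this by conditioning on the global event $\mathcal{G}$ that $B_O(c^\ast)$ is empty of Poisson points, where $c^\ast := c_2 + 2\gamma \log n + O(1)$ is chosen so that, for any $v \in \mathcal{B}$, the portion of $B_v(R)$ lying outside $B_O(c^\ast)$ has angular width $o(n^{-\gamma})$ (by Remark~\ref{rem:aproxAngle}). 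By Lemma~\ref{lem:muBall}, $\mathbb{E}|\calP \cap B_O(c^\ast)| = n^{1 - 2\alpha(1-\gamma) + o(1)}$, which tends to $0$ as long as $\gamma < 1 - \tfrac{1}{2\alpha}$; this range is nonempty since $\alpha > \tfrac12$, so $\Pr(\mathcal{G}) = 1 - o(1)$. Conditional on $\mathcal{G}$ the sets $F_i$ are confined to pairwise disjoint sectors around the $\theta_{Q_i}$, and the seed events become independent. Choosing $C$ small enough that $\eta < \gamma < 1 - \tfrac{1}{2\alpha}$, conditional independence yields $\Pr(\text{no seed succeeds} \mid \mathcal{G}) \leq (1 - n^{-\eta})^m \leq \exp(-n^{\gamma - \eta}) = o(1)$, so a.a.s.\ some seed produces an isolated path component of length $\Theta(\log n)$. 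A final de-Poissonization via~\eqref{lem:Poisson} transfers the conclusion to $G_{\alpha,C}(n)$ with only an $O(\sqrt n)$ factor in the failure probability, still $o(1)$.
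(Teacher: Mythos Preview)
Your approach is essentially the paper's: seed $m=n^{\gamma}$ well-separated points in a constant-width band $\mathcal{B}$, grow from each a monotone path of length $\Theta(\log n)$ inside $\mathcal{B}$ via Lemmas~\ref{lem:medidalb} and~\ref{lem:BallIntersection}, require no extra neighbours outside a small central ball, and separately observe that the central ball is a.a.s.\ empty. The paper runs this with $\gamma=1-\tfrac{1}{2\alpha}-\varepsilon$ and central radius $R(1-\tfrac{1}{2\alpha}-\varepsilon)\approx 2\gamma\log n$, which is exactly your $c^{\ast}$ after matching constants.

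There is, however, a real gap in your de-Poissonization step. You condition on $\mathcal{G}=\{B_O(c^{\ast})\cap\calP=\emptyset\}$ \emph{inside} the Poissonized model and only then de-Poissonize; the Poissonized failure probability thus contains the term $\Pr(\mathcal{G}^c)\asymp n^{1-2\alpha(1-\gamma)}$, and this is $o(n^{-1/2})$ only when $\gamma<1-\tfrac{3}{4\alpha}$. That range is empty for every $\alpha\le 3/4$, so for half of the parameter regime the $O(\sqrt{n})$ blow-up from~\eqref{lem:Poisson} destroys your $o(1)$ conclusion. The paper sidesteps this by decoupling the two ingredients: it defines the seed event as ``path built, and no extra neighbour in $B_{v_j}(R)\setminus B_O(c^{\ast})$'' (so the $F_i$ are disjoint \emph{unconditionally}), shows this holds w.e.p.\ in the Poissonized model, de-Poissonizes that safely, and only \emph{afterwards}, directly in the uniform model, checks via Markov that $B_O(c^{\ast})$ is empty a.a.s. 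Reordering your argument the same way closes the gap. Two minor points: Lemma~\ref{lem:BallIntersection} requires $c_3<c_1<c_2$, not $c_1<c_3<c_2$; and your product $\Pr(\text{path})\cdot\Pr(\text{isolation})$ tacitly assumes disjoint exposure regions---make this rigorous by requiring \emph{exactly one} Poisson point in each step region, as the paper does in its Condition~2.
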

\begin{proof}
As before, we work first in the Poissonized model and derive in the end from it the result in the uniform model.
Fix throughout the proof $c_1, c_2, c_3$ three positive constants 
  such that $c_3 < c_1 < c_2$ and $2e^{c_1-c_2} < e^{-c_3/2}$.
First, by Lemma~\ref{lem:medidalb}, Part~\ref{it:medidalb1},
  applied with $\rho_0=R-c_1$ and $\rho'_0=R-c_2$ we have
\[
\mu(B_{O}(\rho_{O})\setminus B_{O}(\rho'_{O}))
    = e^{-\alpha c_1}(1-e^{-\alpha(c_2-c_1)}+o(1)) 
    = \Theta(1).
\]
Let $\Theta\subseteq [0,2\pi)$ be a set of forbidden angles 
  such that $\mu(R_{\Theta})<1$, where 
  $R_{\Theta}:=\{(r_P,\theta_P): 0 \leq r_P < R, \theta_P \in \Theta\}$
  (for a geometric interpretation of $R_{\Theta}$, note that 
  when $\Theta$ is an interval, $R_{\Theta}$ is a cone with vertex $O$).
As a constant fraction of
  the angles is still allowed, clearly,
\begin{equation}~\label{eq:crit1}
\mu(\big((B_{O}(\rho_{O})\setminus B_{O}(\rho'_{O}))\setminus R_{\Theta}\big)=\Theta(1)
\end{equation}
still holds.
For any vertex $A$ with 
  $R - c_2 \leq r_A \leq R-c_1$, by Lemma~\ref{lem:muBallInterGen}
  (applied with $\rho_A=R$ and $\rho_{O}=R$)
\begin{equation}\label{eq:crit2}
\mu(B_{A}(R)\cap B_{O}(R)) 
  = \frac{2\alpha}{\pi(\alpha-\frac12)}
        e^{-\frac{1}{2}r_A}
        + O(e^{-\alpha r_{A}}) 
 =\Theta(1/n), 
\end{equation}
which together with   
  Lemma~\ref{lem:medidalb}, Part~\ref{it:medidalb2}, yields
\begin{equation}\label{eq:crit3}
\mu(\big(B_A(R)\setminus B_A(R{-}c_3)\big) \cap 
  \big(B_{O}(\rho_{O})\setminus B_{O}(\rho'_{O})\big))
  = \Theta(e^{-R/2})=\Theta(1/n). 
\end{equation} 
For two vertices $A,B$ with $R-c_3 \leq d(A,B) \leq R$ that satisfy 
  $\rho'_{O}\leq r_A, r_B \leq \rho_{O}$, by Lemma~\ref{lem:BallIntersection}, 
  together with~\eqref{eq:crit2},
\begin{equation}\label{eq:crit4}
\mu\big([(B_B(R)\setminus B_B(R{-}c_3)) \cap  
  (B_{O}(\rho_{O})\setminus B_{O}(\rho'_{O}))]\setminus B_A(R)\big) \\
 =\Theta(e^{-R/2})=\Theta(1/n).
\end{equation}
Let $\varepsilon=\varepsilon(\alpha)$ be a constant chosen small enough so that $1-\frac{1}{2\alpha}-\varepsilon > 0$. 
Let $\nu > 0$ be a sufficiently small constant. Let $m:=\nu n^{1-\frac{1}{2\alpha}-\varepsilon}$, and note that since $\alpha < 1$, we have $m=o(n^{1/2})$. We will add in the following, if necessary, up to $m$ vertices $Q_1,\ldots,Q_m \in \calQ$ to $\calP$, all of them chosen independently and following the same distribution as in $G_{\alpha,C}(n)$. 
In order to be more precise, define for $1 \leq j \leq m$, 
  the event $\calE_{j}$ that occurs when
  following conditions hold (if one condition fails, then stop exposing and checking further conditions and proceed with the next $j$; also stop if all conditions hold for one $j$):
%
%
\begin{itemize}
\item \textbf{Condition 1:} 
Add the $j$-th vertex $Q_j \in \calQ$ and let $A_0^j:=Q_j$.  We require $\rho'_O \leq r_{Q_j} \leq \rho_O$, and for $j \geq 2$, we additionally require the coordinates of $A_0^j$ and $A_0^k$ for any $1\leq k<j$ to be sufficiently 
  different in terms of their angles, i.e.,~letting 
  $\Theta_k=\{\theta: |\theta_{A_0^k}-\theta|\leq C''n^{-(1-\frac{1}{2\alpha}-\varepsilon)}\}$ for some large constant $C'$,
 we require that $\theta_{A_0^j} \not\in \cup_{k=1}^{j-1}\Theta_{k}$.

\item \textbf{Condition 2:} 
Expose the region $\big(B_{A_{0}^{j}}(R)\setminus B_{A_{0}^{j}}(R-c_{3})\big) 
  \cap \big(B_{O}(\rho_{O})\setminus B_{O}(\rho'_{O})\big)$. We require exactly one vertex in this region, call it $A_1^j$. Let $L:=\nu'\log n$ with $\nu'>0$  being a small constant. Then, inductively, for $1 \leq i < L$, 
  expose the region $\big(B_{A_{i}^{j}}(R)\setminus B_{A_{i}^{j}}(R-c_{3})\big) \cap \big(B_{O}(\rho_{O})\setminus B_{O}(\rho'_{O})\big) \setminus B_{A_{i-1}^j}(R)$. We
  require also exactly one vertex in this region, and name it inductively $A_{i+1}^j$. In other words, $A_{i+1}^j$ belongs to the $(R-c_{3},R)$-band 
  centered at $A_{i}^{j}$ and also to the $(\rho'_{O},\rho_{O})$-band
  centered at the origin $O$. For $i=L$, expose the region $\big(B_{A_{L}^{j}}(R)\setminus B_{A_{L}^{j}}(R-c_{3})\big) \cap \big(B_{O}(\rho_{O})\setminus B_{O}(\rho'_{O})\big) \setminus B_{A_{L-1}^j}(R)$, and we require that there is no more vertex in this region.
\item \textbf{Condition 3:} 
For any $0 \leq i \leq L$, expose $B_{A_{i}^{j}}(R) \cap (B_O(R) \setminus (B_O(R(1-\frac{1}{2\alpha}-\varepsilon)))$, and we require that $A_i^j$ has no other neighbor inside this region except the one(s) from the previous condition. 
\end{itemize}
We will now bound from above the probability that for all $1\leq j\leq m$ the events ${\calE}_{j}$ fail. 
  Note first that for $\nu$ sufficiently small, the set of forbidden angles
  $\Theta^{j}:=\cup_{k=1}^{j-1}\Theta_{k}$ still is such that $\mu(R_{\Theta^{j}})<1$, and~\eqref{eq:crit1} can 
  be applied with $R_{\Theta}=R_{\Theta^{j}}$ for 
  any $1\leq j\leq m$.   Hence, for any~$j$, independently of the outcomes of previous events, there is an absolute constant $c> 0$ such that for this~$j$ 
  the probability that Condition~1 holds is at least~$c$. 
Given that Condition~1 holds,
  by~\eqref{eq:crit3} applied to $A_0^j$ and~\eqref{eq:crit4} 
  applied successively to $A_1^j,\ldots,A_L^j$ we get that
  the probability that Condition~2 holds 
  is at least $c'^{-L}$ for some fixed $0 < c' < 1$. 
Suppose then that Condition~2 also holds.  
By a union bound and by~\eqref{eq:crit2}, we have
\[
\mu(\bigcup_{i=0}^{L} [B_{A_i^j}(R)\cap (B_{O}(R) \setminus B_O(R(1-\tfrac{1}{2\alpha}-\varepsilon))]) 
 \leq \sum_{i=0}^{L}\mu(B_{A_i^j}(R)\cap B_{O}(R))=O(L/n),
\]
and thus for $\nu'$ sufficiently small the probability that 
  Condition~3 holds is at least $n^{-\eta}$ for some fixed value 
  $\eta$ which can be made small by choosing $\nu'$ small 
(in fact, part of the region might
have been already exposed in Condition~2, but since we know there are
no other vertices in there, this only helps).  Altogether, we obtain
  \[
\Pr{({\calE}_{j}) \geq n^{-\eta'}}
\]
  for some $\eta' > 0$. Again, $\eta'$ can be made sufficiently small by making the constant $\nu'$ (and thus $L=\nu' \log n$) sufficiently small.

Now, since $d(A_{i-1}^j,A_i^j) \leq R$ and  $r_{A_{i-1}^j},r_{A_i^j} \geq R-c_2$, by Remark~\ref{rem:aproxAngle}, we have 
$|\theta_{A_{i-1}^j}-\theta_{A_i^j}| = O(e^{-R/2})$, and therefore $|\theta_{A_0^j}-\theta_{A_{L}^j}|=O(\log n/n)$. Since by construction for $j \neq j'$ we have
$|\theta_{A_0^j}-\theta_{A_0^{j'}}| = \omega(\log n/n)$, for $j \neq j'$ the 
  regions exposed in Condition~2 are disjoint. Also, for any $j \neq j'$, 
  if the region in Condition~3 containing one of the vertices $A_i^j$ or $A_{i'}^{j'}$ is exposed, then 
$B_{A_{i}^{j}}(R) \cap (B_O(R) \setminus (B_O(R(1-\frac{1}{2\alpha}-\varepsilon)))$ is disjoint from $B_{A_{i'}^{j'}}(R) \cap (B_O(R) \setminus (B_O(R(1-\frac{1}{2\alpha}-\varepsilon)))$: indeed, note that
 for any point $P \in B_{A_{i}^{j}}(R) \cap (B_O(R) \setminus (B_O(R(1-\frac{1}{2\alpha}-\varepsilon)))$ we have by Remark~\ref{rem:aproxAngle}
\[
|\theta_{A_i^j} - \theta_{P}| \leq (2+o(1))e^{\frac12 (R-(R-c_2)-R(1-\frac{1}{2\alpha}-\varepsilon))}
  = (2+o(1))e^{\frac12 c_2}n^{-(1-\frac{1}{2\alpha}-\varepsilon)}.
\]
By construction, we have $|\theta_{A_0^j}-\theta_{A_0^{j'}}|\geq C'n^{-(1-\frac{1}{2\alpha}-\varepsilon)}$ for some large enough $C' > 0$, and thus by the triangle inequality for angles there exists $C'':=C''(C') > 0$ such that 
  $|\theta_{A_i^j}-\theta_{A_{i'}^{j'}}|\geq (1+o(1))C''n^{-(1-\frac{1}{2\alpha}-\varepsilon)}$ holds for any $i,i'$ and any $j \neq j'$. Hence, the regions exposed in Condition~3 are disjoint, and by the same reason, the region exposed in Condition~2 for some $j$ and the one exposed in Condition~3 for some $j' \neq j$ are disjoint as well.
It follows that the probabilities of the corresponding conditions 
  to hold are thus independent. 
Hence, for $\nu'$ sufficiently small, $\eta'$ is small enough such that $-\eta'+1-\frac{1}{2\alpha} - \varepsilon > 0$, and
\[
\prod_{j=1}^{m} \Pr\big({\setCompl{\calE_j}}\big)\leq (1-n^{- \eta'})^{m} 
  = e^{- \Omega(n^{\xi})},
\]
for some positive $\xi > 0$.
Thus, w.e.p.~there exists one $j$, for which the event 
  $\calE_{j}$ holds. 

 In order to de-Poissonize, let $\calE_P^j$ be the event that in the Poissonized model there exists some $j$ for which the event $\calE_j$ holds, and similarly $\calE_U^j$ the corresponding event for the uniform model. Since we have shown that 
 $\Pr{(\calE_P^j)} =1-e^{-\Omega(n^{\xi})}$,
 and since $m=o(n^{-1/2})$, using~\eqref{lem:Poisson},
  we also have for some function $\omega_n$ tending to infinity arbitrarily slowly
$ \Pr{(\calE_U^j)} \geq 1-\omega_n\sqrt{n}e^{- \Omega(n^{\xi})}=1-e^{-\Omega(n^{\xi})}.
 $
 Let $\rho_O=R(1-\frac{1}{2\alpha}-\varepsilon)$. Denote by $N_U$ the random variable counting the number of vertices inside $B_O(\rho_O)$ in $G_{\alpha,C}(n)$. By Lemma~\ref{lem:muBall}, $\mu(B_O(\rho_O))=O(e^{-R/2-\alpha \varepsilon R})=o(1/n)$, and thus $\mathbb{E} N_U=o(1)$, and by Markov's inequality,
 $\Pr{(N_U =0)}=1-o(1).$
 Since
\[
1-o(1)= \Pr{(N_U = 0)} 
  = \Pr{(N_U=0 | \calE_U^j)}\Pr{(\calE_U^j)}+\Pr{(N_U=0 | \setCompl{\calE_U^j})}\Pr{(\setCompl{\calE_U^j})} 
  = (1-o(1))\Pr{(N_U=0 | \calE_U^j)}+o(1),
\]
we have $\Pr{(N_U=0 | \calE_U^j)}=1-o(1)$. Thus, 
 $\Pr{(N_U=0, \calE_U^j)}=1-o(1)$. If this event holds, this means that there is no vertex inside $B_O(\rho_O)$, and the vertices $A_0^j,\ldots, A_L^j$ form an induced path, yielding the desired result.
\end{proof}

\section{Conclusion}
We have shown that in random hyperbolic graphs a.a.s.~the diameter 
  of the giant component is $O(polylog(n))$, 
  the size of the second largest component~$O(polylog(n))$, 
  and at the same time there exists a path component of 
  length $\Theta(\log n)$. 
It is an interesting and challenging problem to 
  tighten these bounds by improving the exponents of the 
  $polylog(n)$ terms established in this work.

\section*{Acknowledgement}
We thank Tobias M\"uller for generously answering questions
  concerning the random hyperbolic graph model, specifically about the 
  state-of-art as well as pointers to the relevant 
  literature concerning the model.

\bibliographystyle{alpha}
\bibliography{biblio}


\end{document}
